\documentclass[a4, 12pt, 
]{amsart}

\usepackage[dvipdfmx]{hyperref}
\usepackage{amsmath}
\usepackage{amssymb}
\usepackage{hyperref}
\hypersetup{
	colorlinks=true,
	linkcolor=red,
	citecolor=blue}
	\usepackage[utf8]{inputenc}
\usepackage[T1]{fontenc}
\usepackage{color}
\usepackage{xcolor} 
\usepackage{ulem}
\usepackage{calligra}
\usepackage{mathrsfs}
\usepackage[all]{xy}

\newtheorem{theo}{Theorem}[section]
\newtheorem{prop}[theo]{Proposition}
\newtheorem{lemm}[theo]{Lemma}
\newtheorem{cor}[theo]{Corollary}

\numberwithin{equation}{section}

\theoremstyle{definition}
\newtheorem{defi}[theo]{Definition}

\theoremstyle{remark}
\newtheorem{rem}[theo]{Remark}

\newcommand{\Ker}[0]{\operatorname{Ker}}

\newcommand{\End}[0]{\operatorname{End}}
\newcommand{\Herm}[0]{\operatorname{Herm}}
\newcommand{\rank}[0]{\operatorname{rank}}

\newcommand{\Nak}{\mathrm{Nak.}}

\newcommand{\Id}{\mathrm{Id}}

\newcommand{\deldel}{\sqrt{-1}\partial \overline{\partial}}
\newcommand{\Dbar }{\overline{\partial}}
\newcommand{\e}{\varepsilon}
\newcommand{\ai}{\sqrt{-1}}
\newcommand{\R}{\mathbb{R}}

\newcommand{\C}{\mathbb{C}}

\newcommand{\Z}{\mathbb{Z}}

\newcommand{\glo}{\mathrm{glo.}}
\newcommand{\loc}{\mathrm{loc.}}



\usepackage{geometry}
\geometry{left=35mm,right=35mm,top=43mm,bottom=43mm}


\begin{document}

\title[Nakano positivity: approximations and applications]
{Nakano positivity of singular Hermitian metrics: \\ 
Approximations and applications}

\author{Takahiro Inayama$^{1}$}
\author{Shin-ichi MATSUMURA$^{2}$}

\address{$^{1}$ Department of Mathematics\\
Faculty of Science and Technology\\
Tokyo University of Science\\
2641 Yamazaki, Noda\\
Chiba, 278-8510\\
Japan
}
\email{{\tt inayama\_takahiro@rs.tus.ac.jp}}
\email{{\tt inayama570@gmail.com}}

\address{$^{2}$ Mathematical Institute, Tohoku University, 
6-3, Aramaki Aza-Aoba, Aoba-ku, Sendai 980-8578, Japan.}
\email{{\tt mshinichi-math@tohoku.ac.jp}}
\email{{\tt mshinichi0@gmail.com}}

\date{\today, version 0.01}

\renewcommand{\subjclassname}{%
\textup{2020} Mathematics Subject Classification}
\subjclass[2020]{Primary 32U05, Secondary 32A70, 32L20.}

\keywords
{Singular Hermitian metrics, 
Vector bundles, 
Nakano positivity, 
Direct image sheaves, 
Approximations of metrics.}

\maketitle

\begin{abstract}
This paper studies the approximation of singular Hermitian metrics on vector bundles 
using smooth Hermitian metrics with Nakano semi-positive curvature on Zariski open sets. 
We show that singular Hermitian metrics capable of this approximation 
satisfy Nakano semi-positivity as defined through the $\overline{\partial} $-equation with optimal $L^2$-estimates. 
Furthermore, for a projective fibration $f \colon  X \to Y$ with a line bundle $L$ on $X$, 
we provide a specific condition under which the Narasimhan-Simha metric 
on the direct image sheaf $f_{*}\mathcal{O}_{X}(K_{X/Y}+L)$ admits this approximation. 
As an application, we establish several vanishing theorems. 
\end{abstract}

\tableofcontents

\section{Introduction}\label{Sec-1}

Generalizing the theory from smooth to singular Hermitian metrics is a significant topic in complex geometry. 
This aspect is particularly vital when addressing singular Hermitian metrics on (holomorphic) vector bundles.
A pivotal challenge lies in establishing an \textit{appropriate} definition of Nakano positivity 
and expanding the theoretical framework beyond smooth Hermitian metrics.
Primarily, there are two approaches to defining Nakano positivity: 
The first approach employs an approximation with smooth Hermitian metrics, 
initiated by de Cataldo \cite{deC98} and has been continually explored (see \cite{GMY22, GMY23} for examples).
The second approach employs the $\overline{\partial} $-equation with optimal $L^2$-estimates of H\"ormander type, 
which has recently attracted attention and is developing rapidly (see  \cite{Ber98, HI21, Ina22, DNWZ23} for examples).
Despite these advancements, defining Nakano positivity for singular Hermitian metrics remains an unresolved issue, 
with challenges still to be addressed:
\begin{enumerate}
\item[(1)] How are the first and second approaches of Nakano positivity related?
\item[(2)] What are typical examples of Nakano positive singular Hermitian metrics?
\item[(3)] Is it possible to derive vanishing theorems for singular Hermitian metrics?
\end{enumerate}

The primary purpose of this paper is to propose a new framework regarding the Nakano positivity of a singular Hermitian metric $h$ on a vector bundle $E$ that encompasses the existing theories developed by de Cataldo and Guan--Mi--Yuan.
Our definition of Nakano semi-positivity, denoted by $ \ai\Theta_{h} \geq^s_{\Nak} 0 $, 
 is based on the existence of a sequence $\{ h_{s}\}_{s=1}^{\infty}$ of singular Hermitian metrics  
converging to $h$, where each metric is smooth and has arbitrarily small Nakano negativity on a Zariski open set 
(see Definition \ref{def-singNak:sequenceNew} for the precise condition). 

Our definition builds upon the first approach, drawing inspiration from \cite{GMY22, GMY23}. 
Compared to the approach in \cite{GMY22, GMY23}, our definition is more flexible 
in that $h_{s}$ is not required to be smooth on the entire space. 
This flexibility enables us to establish Theorem \ref{thm-direct}, 
which demonstrates that the direct image sheaf $f_{*}(\mathcal{O}_{X}(K_{X/Y}+L))$  
satisfies our Nakano semi-positivity under a reasonable assumption. 
This result partially solves Problem (2) and is also important in terms of applications to algebraic geometry.

\begin{theo}\label{thm-direct}
Let $f \colon X \to Y$ be a projective fibration between $($not necessarily compact$)$ 
complex manifolds $(X, \omega_{X})$, $(Y, \omega_{Y})$ with Hermitian forms. 
Let $\theta$ be a continuous real $(1,1)$-form on $Y$ and 
$L$ be a line bundle on $X$ satisfying the following conditions$:$
\begin{itemize}

\item $L$ admits a singular Hermitian metric $h$ such that $\sqrt{-1}\Theta_{h} \geq f^{*}\theta$ holds on $Y$ and 
$\{x \in X \mid \nu(h, x)>0\}$ is not dominant over $Y$, 
where $\nu(h, x)$ denotes the Lelong number of the weight of $h$ at a point $x \in X$. 

\item $L$ is $f$-ample in the following sense: $L$ admits a smooth Hermitian metric $g$ 
such that $\sqrt{-1}\Theta_{g} + f^{*} \omega_{Y} \geq \omega_{X}$ holds on $X$. 
\end{itemize}
Then, the induced Narasimhan-Simha metric $G$ on 
the direct image sheaf $f_{*} \mathcal{O}_{X}(K_{X/Y}+L)$ 
satisfies that 
$
\ai\Theta_G \geq^{s}_\Nak  \theta \otimes \Id_{G}  \text{ on }Y 
\text{ in the sense of Definition \ref{def-singNak:sequenceNew-sheaf}.}
$
\end{theo}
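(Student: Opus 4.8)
The plan is to produce a sequence of metrics on $f_{*}\mathcal{O}_{X}(K_{X/Y}+L)$ of the type required by Definition~\ref{def-singNak:sequenceNew-sheaf}, obtained by regularising $h$ along the fibres and forming the associated $L^{2}$-metrics. Since the conclusion is local over $Y$, I may and do assume $Y$ relatively compact and shrink it whenever convenient. Write $\Lambda := \{x \in X \mid \nu(h,x)>0\}$ for the positive-Lelong-number locus of $h$, a countable union of analytic subsets by Siu's theorem; by hypothesis $\Lambda$ is not dominant over $Y$, so there is a nonempty Zariski-open $Y_{0}\subseteq Y$ with $f^{-1}(Y_{0})\cap\Lambda=\emptyset$, which after further shrinking I also take to lie in the locus where the coherent sheaf $f_{*}\mathcal{O}_{X}(K_{X/Y}+L)$ is locally free and compatible with base change (it is here that $f$-ampleness of $L$ guarantees such a locus is nonempty). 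For $y\in Y_{0}$ the fibre $X_{y}$ is a compact projective manifold on which $h$ has identically vanishing Lelong numbers, so $\mathcal{I}(h|_{X_{y}})=\mathcal{O}_{X_{y}}$ and the Narasimhan--Simha $L^{2}$-metric $G$ is a well-defined singular Hermitian metric on $f_{*}\mathcal{O}_{X}(K_{X/Y}+L)|_{Y_{0}}$.

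First I would regularise $h$. Applying Demailly's approximation theorem to the local weights of $h$, in the version with arbitrarily small loss of positivity and with control of Lelong numbers, produces singular Hermitian metrics $h_{s}$ on $L$ with analytic singularities such that $h_{s}\to h$ (monotonically, after the usual adjustment), $\ai\Theta_{h_{s}}\geq f^{*}\theta-\e_{s}\omega_{X}$ with $\e_{s}\downarrow 0$, and $\nu(h_{s},x)\leq\nu(h,x)$ for every $x\in X$. The last property shows that the non-smooth locus of $h_{s}$ — an analytic set along which $h_{s}$ has strictly positive Lelong numbers — is contained in $\Lambda$, hence in $f^{-1}(Y\setminus Y_{0})$; therefore every $h_{s}$ is genuinely smooth on $f^{-1}(Y_{0})$. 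Next I would repair the fibrewise negativity using the $f$-ample metric $g$: set $H_{s}:=h_{s}\cdot g^{\,2\e_{s}}$, still smooth on $f^{-1}(Y_{0})$ and with $H_{s}\to h$; then $\ai\Theta_{g}+f^{*}\omega_{Y}\geq\omega_{X}$ gives
\[
\ai\Theta_{H_{s}}\ \geq\ f^{*}\theta-\e_{s}\omega_{X}+2\e_{s}(\omega_{X}-f^{*}\omega_{Y})\ =\ f^{*}(\theta-2\e_{s}\omega_{Y})+\e_{s}\omega_{X},
\]
so that $\ai\Theta_{H_{s}}\geq f^{*}(\theta-2\e_{s}\omega_{Y})$ on $f^{-1}(Y_{0})$ and, in particular, $\ai\Theta_{H_{s}}$ is strictly positive in the fibre directions.

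Let $G_{s}$ be the Narasimhan--Simha $L^{2}$-metric on $f_{*}\mathcal{O}_{X}(K_{X/Y}+L)$ induced by $H_{s}$ over $Y_{0}$. Because $H_{s}$ is smooth and fibrewise positive, the fibres over $Y_{0}$ are compact, and $Y_{0}$ lies in the base-change locus, $G_{s}$ is a \emph{smooth} Hermitian metric on $Y_{0}$; Berndtsson's curvature theorem for direct images, in the form twisted by a $(1,1)$-form on the base, then yields $\ai\Theta_{G_{s}}\geq(\theta-2\e_{s}\omega_{Y})\otimes\Id_{G_{s}}$ on $Y_{0}$, i.e.\ $G_{s}$ has Nakano negativity at most $2\e_{s}$ relative to $\theta\otimes\Id$. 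Finally, since $H_{s}\to h$ monotonically, monotone convergence gives $\|u\|^{2}_{G_{s},y}=\int_{X_{y}}|u|^{2}_{H_{s}}\longrightarrow\int_{X_{y}}|u|^{2}_{h}=\|u\|^{2}_{G,y}$ for local sections $u$, so $G_{s}\to G$. The sequence $\{G_{s}\}_{s\geq1}$, smooth on the common Zariski-open $Y_{0}$ with Nakano negativity tending to $0$ and converging to $G$, is then exactly of the kind demanded by Definition~\ref{def-singNak:sequenceNew-sheaf}, which proves $\ai\Theta_{G}\geq^{s}_{\Nak}\theta\otimes\Id_{G}$ on $Y$.

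I expect the regularisation step to be the main obstacle: one must guarantee that the Demailly approximants, which a priori carry only analytic singularities, are in fact smooth over the fixed Zariski-open set $Y_{0}$, and this is precisely where the hypothesis that the positive-Lelong-number locus of $h$ is not dominant over $Y$ enters essentially. The remaining difficulties are more bookkeeping in nature: localising the construction over relatively compact Stein pieces of $Y_{0}$ to handle the non-compactness of $X$ and $Y$, tracking the local freeness and base-change behaviour of the direct image sheaf (which relies on the $f$-ampleness of $L$), absorbing the mere continuity of $\theta$ by one further harmless approximation, and arranging the Demailly approximants so that $h_{s}\to h$ in the mode of convergence demanded by Definition~\ref{def-singNak:sequenceNew-sheaf}.
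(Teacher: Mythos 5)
Your proposal follows the same basic route as the paper: regularize $h$ by Demailly's theorem, correct the small loss of positivity using the $f$-ample metric $g$, invoke Berndtsson's curvature theorem (and its refinement by Deng--Ning--Wang--Zhou) for the resulting Narasimhan--Simha metrics, and conclude by monotone convergence. The differences are in how the correction is performed and how the singular loci are tracked, and the first of these hides a real gap.

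The main missing ingredient is the monotonicity of $H_{s}$, which you assert but do not prove. You set $H_{s}=h_{s}\cdot g^{2\e_{s}}$ and then write ``since $H_{s}\to h$ monotonically, monotone convergence gives\ldots''. But the Demailly approximants $h_{s}$ increase to $h$ while the factor $g^{2\e_{s}}$ does \emph{not} in general vary monotonically: its behaviour depends on whether the local weight of $g$ is positive or negative, i.e.\ on whether $g\le 1$ or $g\ge 1$ in the chosen trivialization, and $g$ is an arbitrary $f$-ample metric. The paper gets around this precisely by first scaling so that $g\le 1$ (and, for their different correction formula $H_{\e}=g^{\e}\cdot h_{\delta(\e)}^{1-\e}$, also $h\ge 2$); then the explicit ratio $H_{\e_{1}}/H_{\e_{2}}$ is checked to be $\ge 1$. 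For your formula, the normalization $g\le 1$ alone suffices, since then $g^{2(\e_{s_{1}}-\e_{s_{2}})}\ge 1$ for $s_{1}>s_{2}$. But without \emph{some} normalization the monotone-convergence step fails, and Condition (a) of Definition~\ref{def-singNak:sequenceNew-sheaf} is not met. Moreover, in the non-compact case this normalization cannot be achieved by a single global constant; the paper handles it by absorbing smooth functions $\varphi_{g},\varphi_{h}$ pulled back from $Y$ into $g$ and $h$ and then redoing the curvature bookkeeping on each $Y_{j}\Subset Y$, which is more than routine ``localising over relatively compact pieces.''

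A second, more minor, issue: you claim there is a single Zariski-open $Y_{0}\subseteq Y$ with $f^{-1}(Y_{0})\cap\Lambda=\emptyset$, where $\Lambda=\{\nu(h,\cdot)>0\}$. Since $\Lambda$ is only a countable union of proper analytic subsets, $f(\Lambda)$ need not be contained in a single proper analytic subset of $Y$, so such a $Y_{0}$ may not exist as a Zariski-open set. This is exactly why Definition~\ref{def-singNak:sequenceNew-sheaf} allows $\Sigma_{j,s}$ to depend on $s$: each Demailly approximant $h_{s}$ has analytic singular locus $Z_{s}\subset\Lambda$, $f(Z_{s})$ is a proper analytic subset of $Y$, and one takes $\Sigma_{s}=f(Z_{s})\cup V$ (with $V$ the non--base-change locus), letting $\Sigma=\cup_{s}\Sigma_{s}$ be a countable union. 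You should phrase the argument this way rather than fixing one Zariski-open $Y_{0}$.

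With the normalization of $g$ stated and used, and with the singular loci allowed to depend on $s$, your argument becomes essentially the paper's proof, via the somewhat simpler correction $H_{s}=h_{s}g^{2\e_{s}}$ rather than the convex combination $g^{\e}h_{\delta(\e)}^{1-\e}$ — a genuine, if modest, simplification since it avoids needing the lower normalization $h\ge 2$.
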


In response to Problem (1), we establish Theorem \ref{thm-seqimpliesL2}, 
which demonstrates that our definition of Nakano positivity (as defined in Definition \ref{def-singNak:sequenceNew}) 
implies the Nakano positivity defined by the $\overline{\partial} $-equation (see Definition \ref{def-singNak:l2global}).

\begin{theo}\label{thm-seqimpliesL2}
Let $E$ be a vector bundle on a complex manifold $X$ and $h$ be a singular Hermitian metric on $E$. 
Let $\theta$ be a continuous real $(1,1)$-form on $X$.
If 
$
\ai\Theta_h \geq^{s}_\Nak \theta \otimes\Id_{E_h} 
\text{ holds in the sense of Definition \ref{def-singNak:sequenceNew},}
$
then 
$
\ai\Theta_h \geq^{L^2}_{\Nak} \theta\otimes\Id_{E_h} \text{ holds } 
\text{   in the sense of Definition \ref{def-singNak:l2global}.}
$
\end{theo}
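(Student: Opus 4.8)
The plan is to unwind both definitions, reduce to a local $L^{2}$-solvability statement, solve the $\overline{\partial}$-equation on the Zariski open set where the approximating metric is smooth using the classical optimal $L^{2}$-estimate, and then pass to the limit by a weak-$L^{2}$-compactness argument.

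\emph{Step 1 (Reduction).} By Definition~\ref{def-singNak:l2global}, the conclusion $\ai\Theta_{h}\geq^{L^{2}}_{\Nak}\theta\otimes\Id_{E_{h}}$ is equivalent to the following: for every relatively compact domain $\Omega\Subset X$ (it suffices to take coordinate balls), every Kähler form $\omega$ on $\Omega$, every parameter $\e>0$ as in Definition~\ref{def-singNak:l2global}, and every $\overline{\partial}$-closed $E$-valued $(n,1)$-form $u$ on $\Omega$ for which the right-hand side $I(u,h):=\int_{\Omega}\langle B^{-1}u,u\rangle_{h}\,dV_{\omega}$ is finite --- where $B=B_{\theta,\omega,\e}$ is the curvature operator built from the lower bound $\theta\otimes\Id_{E}$ together with the regularization of Definition~\ref{def-singNak:l2global} --- there is $v$ with $\overline{\partial}v=u$ on $\Omega$ and $\int_{\Omega}|v|_{h}^{2}\,dV_{\omega}\le I(u,h)$. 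If the definition is phrased globally on $X$, the local statements are recombined by exhausting $X$ and a diagonal argument. Fix such $\Omega,\omega,\e,u$.

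\emph{Step 2 (Solving on the smooth locus).} By the hypothesis $\ai\Theta_{h}\geq^{s}_{\Nak}\theta\otimes\Id_{E_{h}}$ there is a sequence $\{h_{s}\}_{s\ge1}$ as in Definition~\ref{def-singNak:sequenceNew}: each $h_{s}$ is smooth on a Zariski open set $X_{s}=X\setminus Z_{s}$ with $\ai\Theta_{h_{s}}\ge(\theta-\e_{s}\omega_{X})\otimes\Id_{E}$ there in the Nakano sense, $\e_{s}\downarrow0$, and $h_{s}\to h$ in the prescribed (monotone) manner. Since $\omega_{X}\le C\omega$ on $\Omega$, the smooth metric $h_{s}$ is Nakano $\ge(\theta-C\e_{s}\omega)\otimes\Id_{E}$ on $\Omega\setminus Z_{s}$. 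The set $\Omega\setminus Z_{s}$ carries a complete Kähler metric (Demailly's lemma on complements of analytic subsets), so the classical optimal $L^{2}$-estimate of Hörmander--Demailly type applies to the smooth metric $h_{s}$; using that for $(n,1)$-forms the quantity $\langle B^{-1}u,u\rangle\,dV$ is independent of the auxiliary complete Kähler metric, we obtain $v_{s}$ on $\Omega\setminus Z_{s}$ with $\overline{\partial}v_{s}=u$ and $\int_{\Omega\setminus Z_{s}}|v_{s}|_{h_{s}}^{2}\,dV_{\omega}\le I_{s}:=\int_{\Omega\setminus Z_{s}}\langle B_{s}^{-1}u,u\rangle_{h_{s}}\,dV_{\omega}$, where $B_{s}$ is the curvature operator attached to $\theta-C\e_{s}\omega$. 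As $Z_{s}$ is a proper analytic subset --- of measure zero and removable for $L^{2}$-forms with $L^{2}$-valued $\overline{\partial}$ --- the section $v_{s}$ extends to $\Omega$ with $\overline{\partial}v_{s}=u$ and the same estimate over $\Omega$.

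\emph{Step 3 (Limit) and the main obstacle.} As $s\to\infty$, $\e_{s}\downarrow0$ makes $B_{s}$ converge to $B$ and $h_{s}\to h$ makes $\langle B_{s}^{-1}u,u\rangle_{h_{s}}$ converge to $\langle B^{-1}u,u\rangle_{h}$, monotonically along the convergence mode of Definition~\ref{def-singNak:sequenceNew}; hence $I_{s}\to I(u,h)$ by monotone (or dominated) convergence, and in particular $\sup_{s}\int_{\Omega}|v_{s}|_{h_{s}}^{2}\,dV_{\omega}<\infty$. Combining this uniform bound with the monotonicity of $\{h_{s}\}$, the sequence $\{v_{s}\}$ is bounded in $L^{2}_{\mathrm{loc}}$ with respect to a fixed metric; passing to a subsequence and diagonalizing over an exhaustion of $\Omega$ gives $v_{s}\rightharpoonup v$ weakly in $L^{2}_{\mathrm{loc}}$. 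The closedness of $\overline{\partial}$ yields $\overline{\partial}v=u$ on $\Omega$, and lower semicontinuity of the $L^{2}$-norm under weak convergence, together with $h_{s}\to h$ and $I_{s}\to I(u,h)$, gives $\int_{\Omega}|v|_{h}^{2}\,dV_{\omega}\le\liminf_{s}\int_{\Omega}|v_{s}|_{h_{s}}^{2}\,dV_{\omega}\le\liminf_{s}I_{s}=I(u,h)$, which is the required estimate. The reduction and the solution on the smooth locus are routine; the crux is this limiting step, where one must organize the weak-compactness extraction so that the right-hand sides $I_{s}$ converge to the correct $I(u,h)$ --- exactly where the precise (monotone) convergence mode of the metrics $h_{s}$ in Definition~\ref{def-singNak:sequenceNew} enters, both to pass to the limit in the operators $B_{s}$ and to make the norm of the weak limit lower semicontinuous against the varying $h_{s}$-norms --- while checking that the measure-zero analytic loci $Z_{s}$, which depend on $s$, obstruct neither the $L^{2}$-solvability on $\Omega\setminus Z_{s}$ nor the extension across $Z_{s}$. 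This is carried out by a weak-$L^{2}$-compactness argument in the spirit of \cite{deC98,GMY22,GMY23}.
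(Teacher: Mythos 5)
Your outline is close in spirit to the paper's proof but contains a genuine gap in Step 2, precisely where you declare the argument ``routine.''

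In Definition \ref{def-singNak:sequenceNew} the functions $\lambda_{j,s}$ converge to $0$ only \emph{almost everywhere} on $X_j$, not uniformly. Consequently, even after fixing a relatively compact $\Omega_k$ and a weight $\psi$ with $\theta + \deldel\psi > 0$, there is in general \emph{no} $s$ for which $\theta + \deldel\psi - \lambda_{j,s}\omega$ is semi-positive on all of $\Omega_k\setminus\Sigma_{j,s}$. Hence the curvature operator $B_s := [\ai\Theta_{h_s e^{-\psi}},\Lambda_{\omega_\Omega}]$ is \emph{not} positive and $B_s^{-1}$ need not exist, so the classical H\"ormander--Demailly optimal estimate for $h_s$ cannot be invoked, and the expression $I_s = \int\langle B_s^{-1}u,u\rangle_{h_s}$ that you pass to the limit is ill-defined. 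What you may apply is only the positivity of $B_s + C_k\lambda_{j,s}I$, which dominates the fixed operator $B_{\omega_\Omega,\psi,\theta\otimes\Id_E}$. This is exactly what forces the paper to use Lemma \ref{lem-l2correcting} (the Guan--Mi--Yuan lemma with a ``correcting term''): one obtains not $\overline{\partial}u_{k,s}=v$ but $\overline{\partial}u_{k,s}+P_s(\sqrt{C_k\lambda_{j,s}}\,\tau_{k,s})=v$, together with a two-term $L^2$-bound on $u_{k,s}$ and $\tau_{k,s}$.

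The remaining work is then to kill the correcting term in the limit: since $\|\tau_{k,s}\|$ is uniformly bounded and $\lambda_{j,s}\to0$ pointwise with a uniform continuous bound, Lemmas \ref{lem-GMY1} and \ref{lem-GMY2} show $P_s(\sqrt{C_k\lambda_{j,s}}\tau_{k,s})\rightharpoonup 0$ weakly as $s\to\infty$, and then the extraction you describe over $k$ and $\delta$ proceeds. Your Step 3 also need not invoke convergence $I_s\to I(u,h)$: the chain of inequalities $\langle(B_s+C_k\lambda_{j,s}I)^{-1}v,v\rangle_{h_s}\le\langle B^{-1}v,v\rangle_{h_s}\le\langle B^{-1}v,v\rangle_h$ (using monotonicity $h_s\le h$) gives the \emph{fixed} majorant $I(v,h)$ uniformly in $s$, so no such limit is required. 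In short: the weak-compactness machinery in your Step 3 is on the right track, but the missing ingredient is the correcting-term $L^2$-lemma in Step 2, without which there is nothing to pass to the limit.
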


As mentioned in Problem (3), an advantage of Nakano-positivity is its applicability to vanishing theorems. 
This paper establishes new vanishing theorems (see Theorems \ref{mainthm-vanishing} and \ref{thm-vanishing-vec}), 
which generalize some vanishing theorems of Nadel type and related works in \cite{Iwa21, Ina20, Ina22}.

\begin{theo}\label{mainthm-vanishing}
Let $(X,\omega)$ be a weakly pseudoconvex K\"ahler manifold and 
$(E, h)$ be a vector bundle on $X$ with a singular Hermitian metric $h$. 
Assume that 
    \[
    \ai\Theta_h \geq^s_{\Nak} \omega'\otimes \Id_{E_h} \text{ holds in the sense of Definition \ref{def-singNak:sequenceNew}}
    \]
    for some continuous Hermitian form $\omega'$ on $X$. 
    Then, it follows that 
    \[
H^q(X, K_X\otimes \mathcal{E}(h))=0
\]
for $q>0$.
Here $\mathcal{E}(h)$ denotes the sheaf of germs of locally $L^{2}$-integrable sections of $E$  with respect to $h$. 
\end{theo}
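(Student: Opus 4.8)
The plan is to reduce the vanishing statement to the analogous statement for the $L^2$-Dolbeault complex using the optimal $L^2$-estimate characterization of Nakano positivity. Concretely, by Theorem~\ref{thm-seqimpliesL2}, the hypothesis $\ai\Theta_h \geq^s_{\Nak} \omega'\otimes \Id_{E_h}$ upgrades to $\ai\Theta_h \geq^{L^2}_{\Nak} \omega'\otimes \Id_{E_h}$ in the sense of Definition~\ref{def-singNak:l2global}, i.e.\ the twisted $\Dbar$-equation for $E$-valued forms is solvable with the optimal estimate governed by $\omega'$. I would then run the standard De~Rham--Weil / Dolbeault argument: one shows that the complex of sheaves of $E$-valued $(n,\bullet)$-forms that are locally $L^2$ with respect to $h$ (and with $L^2$ image under $\Dbar$) is a fine resolution of $K_X\otimes\mathcal{E}(h)$, so that $H^q(X,K_X\otimes\mathcal{E}(h))$ is computed as the $q$-th cohomology of the complex of global such forms. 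Fineness holds because these sheaves are modules over $\mathcal{C}^\infty_X$ (partitions of unity preserve the local $L^2$ condition), and exactness in positive degrees is the local solvability of $\Dbar$ with $L^2$-estimates, which is available on small coordinate balls. The key point—that this is genuinely a resolution of $K_X\otimes\mathcal{E}(h)$ rather than something larger—is the Bochner--Kodaira--H\"ormander a~priori inequality in the singular setting, which is exactly what the $L^2$ definition in Definition~\ref{def-singNak:l2global} encodes.

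Next I would handle the global cohomology vanishing. Given a $\Dbar$-closed global $E$-valued $(n,q)$-form $u$ with $q>0$ that is locally $L^2$ with respect to $h$, the weak pseudoconvexity of $(X,\omega)$ lets me exhaust $X$ by relatively compact weakly pseudoconvex sublevel sets $X_c=\{\psi<c\}$ of a smooth exhaustion $\psi$, and—after modifying the metric by a rapidly increasing convex function $\chi(\psi)$ of the exhaustion, which does not affect curvature positivity in the relevant sense but forces finiteness of all the integrals—apply the optimal $L^2$-estimate from $\ai\Theta_h\geq^{L^2}_{\Nak}\omega'\otimes\Id_{E_h}$ to solve $\Dbar v_c=u$ on each $X_c$ with a bound on $\int_{X_c}\lvert v_c\rvert^2$ that is uniform in $c$ (here one uses that $\omega'\geq 0$, or more precisely that the strict positivity contributed by $\omega'$ makes the estimate effective even in degree $q$; this is the usual mechanism behind Nadel-type vanishing). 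A diagonal/weak-compactness argument then extracts a global solution $v$ on $X$, showing $u$ is $\Dbar$-exact in the $L^2$ complex, hence $[u]=0$ in $H^q(X,K_X\otimes\mathcal{E}(h))$.

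I expect the main obstacle to be the bookkeeping around the \emph{singular} metric $h$ and the Zariski-open locus on which the approximating metrics $h_s$ are smooth: one must be careful that "locally $L^2$ with respect to $h$" is the correct coherent sheaf $\mathcal{E}(h)$, that the Dolbeault-type complex really resolves $K_X\otimes\mathcal{E}(h)$ across the singular locus of $h$, and that the optimal $L^2$-estimate of Definition~\ref{def-singNak:l2global} may be applied on the exhausting domains $X_c$ after the convexity modification without destroying either the positivity $\geq\omega'\otimes\Id$ or the local integrability. The Kähler hypothesis enters to guarantee the Bochner--Kodaira identity has no torsion term, and weak pseudoconvexity is what supplies the exhaustion; both should be used exactly as in the smooth Nakano vanishing theorem, with Theorem~\ref{thm-seqimpliesL2} doing the work of making the curvature hypothesis usable. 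A secondary technical point is ensuring the uniform bound survives the limit $c\to\infty$, which is standard once the estimate constant is independent of $c$.
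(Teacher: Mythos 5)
Your overall strategy---the De~Rham--Weil reduction, exhaustion of $X$ via weak pseudoconvexity, and modification of the metric by $e^{-\chi\circ\psi}$ for a rapidly increasing convex function $\chi$ of the exhaustion $\psi$---matches the paper's proof, and the first half (that the locally $L^2$ Dolbeault complex is a fine resolution of $K_X\otimes\mathcal{E}(h)$, with local exactness supplied by the $L^2$-estimate characterization obtained from Theorem~\ref{thm-seqimpliesL2}) is correct and essentially what the paper does.

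The gap is in the global solvability step. The condition $\ai\Theta_h\geq^{L^2}_{\Nak}\omega'\otimes\Id_{E_h}$ from Definition~\ref{def-singNak:l2global} only asserts solvability of $\Dbar$ with the optimal estimate on a \emph{Stein coordinate chart} $\Omega$ admitting a trivialization $E|_\Omega\cong\Omega\times\C^r$; it says nothing about the exhausting sublevel sets $X_c=\{\psi<c\}$, which are weakly pseudoconvex but are neither coordinate patches nor domains on which $E$ is necessarily trivial. Your step ``apply the optimal $L^2$-estimate from $\geq^{L^2}_{\Nak}$ to solve $\Dbar v_c=u$ on each $X_c$'' therefore does not follow from the definition; to get from the Stein-chart estimate to global solvability on $X_c$ or on $X$ one must re-run the approximation and weak-limit machinery rather than treat $\geq^{L^2}_{\Nak}$ as a black box. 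This is exactly how the paper proceeds: it does not route the vanishing through Definition~\ref{def-singNak:l2global} at all, but proves a direct global solvability statement (Theorem~\ref{thm-weaklyL2}) by exhausting $X$ by relatively compact weakly pseudoconvex $X_j$, applying Lemma~\ref{lem-l2correcting} to the approximating $C^2$ metrics $h_{j,s}$ on the complete K\"ahler manifolds $X_j\setminus\Sigma_{j,s}$, and extracting weak limits in $s$ and then $j$ via Lemmas~\ref{lem-GMY1} and~\ref{lem-GMY2}. If you replace the appeal to $\geq^{L^2}_{\Nak}$ on $X_c$ by an appeal to Theorem~\ref{thm-weaklyL2} (applied to $he^{-\chi\circ\psi}$, with $f$ a continuous positive function satisfying $\omega'\geq f\omega$), your argument closes and coincides with the paper's.
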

\begin{theo}\label{thm-vanishing-vec}
Let $(X,\omega)$ be a weakly pseudoconvex K\"ahler manifold and  
$(E, h)$ be a vector bundle on $X$ with a singular Hermitian metric $h$. 
Consider  the projective space bundle $f \colon \mathbb{P}(E) \to X$ 
and the hyperplane  bundle $\mathcal{O}_{\mathbb{P}(E)}(1)$ on $\mathbb{P}(E)$. 
Let $h_{q}$ be the singular Hermitian metric  on  $\mathcal{O}_{\mathbb{P}(E)}(1)$ 
induced by $h$ and $f^{*}E \to \mathcal{O}_{\mathbb{P}(E)}(1)$. 
Assume that $(\mathcal{O}_{\mathbb{P}(E)}(1), h_{q})$ is a big line bundle in the strong sense$:$  
the curvature current $\sqrt{-1}\Theta_{h_{q}}$ is a K\"ahler current and 
$\{x \in \mathbb{P}(E) \mid \nu(h_{q}, x)>0\}$ is not dominant over $X$.  
Then,  it follows that 
    \[
H^q(X, K_X\otimes S^{m}\mathcal{E}\otimes \det \mathcal{E}(S^{m}h \otimes \det h))=0
\]
for $q>0$ and $m \geq 0$.
\end{theo}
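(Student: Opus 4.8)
The plan is to deduce the statement from Theorem \ref{thm-direct} and Theorem \ref{mainthm-vanishing}, applied to the projective bundle $f \colon \mathbb{P}(E) \to X$ and a suitable power of $\mathcal{O}_{\mathbb{P}(E)}(1)$. Write $r := \rank E$, fix $m \geq 0$, and set $L := \mathcal{O}_{\mathbb{P}(E)}(m+r)$ equipped with the singular Hermitian metric $h_{q}^{\,m+r}$. From the relative Euler sequence one has $K_{\mathbb{P}(E)/X} = \mathcal{O}_{\mathbb{P}(E)}(-r) \otimes f^{*}\det E$, so that
\[
K_{\mathbb{P}(E)/X} + L = \mathcal{O}_{\mathbb{P}(E)}(m) \otimes f^{*}\det E,
\qquad
f_{*}\mathcal{O}_{\mathbb{P}(E)}\bigl(K_{\mathbb{P}(E)/X}+L\bigr) \cong S^{m}E \otimes \det E,
\]
the latter sheaf being locally free since $f$ is a projective bundle.

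Next I would verify the two hypotheses of Theorem \ref{thm-direct} for $L$. Since $\ai\Theta_{h_{q}}$ is a K\"ahler current, there is a Hermitian form $\Omega$ on $\mathbb{P}(E)$ with $\ai\Theta_{h_{q}} \geq \Omega$. Pushing $\Omega$ forward through the proper submersion $f$, that is, setting $\theta_{y}(u,\bar u) := \inf_{x \in f^{-1}(y)} \inf\{\, \Omega_{x}(v,\bar v) : df_{x}(v) = u \,\}$ for $u \in T_{y}X$, produces a continuous positive $(1,1)$-form $\theta$ on $X$: the inner infimum is attained at the $\Omega$-orthogonal projection of a lift of $u$ off the fibre directions, hence is $>0$ for $u \neq 0$, and the outer infimum runs over a compact fibre. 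Then $f^{*}\theta \leq \Omega \leq \ai\Theta_{h_{q}} \leq \ai\Theta_{h_{q}^{m+r}}$ (using $m+r \geq 1$ and $\ai\Theta_{h_{q}} \geq 0$), and $\{x \mid \nu(h_{q}^{m+r},x)>0\} = \{x \mid \nu(h_{q},x)>0\}$ is not dominant over $X$ by hypothesis; this gives the first condition. Moreover $L$ is $f$-ample because $m+r \geq 1$ and $\mathcal{O}_{\mathbb{P}(E)}(1)$ is, so $L$ admits a smooth metric $g$ with $\ai\Theta_{g} + f^{*}\omega \geq \omega_{\mathbb{P}(E)}$ for a suitably small Hermitian form $\omega_{\mathbb{P}(E)}$ on $\mathbb{P}(E)$; this is the second condition, and it is compatible with the first since $\theta$ is extracted from $\ai\Theta_{h_{q}}$ independently of $\omega_{\mathbb{P}(E)}$. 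Theorem \ref{thm-direct} then yields that the Narasimhan--Simha metric $G$ on $f_{*}\mathcal{O}_{\mathbb{P}(E)}(K_{\mathbb{P}(E)/X}+L) \cong S^{m}E \otimes \det E$ satisfies $\ai\Theta_{G} \geq^{s}_{\Nak} \theta \otimes \Id$ on $X$, with $\theta$ a continuous Hermitian form.

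Then I would identify $G$ with $S^{m}h \otimes \det h$ up to a universal positive constant. Over any $y \in X$ for which $h|_{y}$ is a finite nondegenerate inner product (which holds outside a measure-zero set), the restriction $h_{q}|_{\mathbb{P}(E_{y})}$ is the quotient Fubini--Study metric on $\mathcal{O}_{\mathbb{P}(E_{y})}(1)$ attached to $h|_{y}$, and the Narasimhan--Simha inner product on $H^{0}\bigl(\mathbb{P}(E_{y}),\, K_{\mathbb{P}(E_{y})} \otimes L|_{\mathbb{P}(E_{y})}\bigr) \cong S^{m}E_{y} \otimes \det E_{y}$ is invariant under the unitary group of $(E_{y}, h|_{y})$. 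As $S^{m}E_{y} \otimes \det E_{y}$ is an irreducible representation of that group, Schur's lemma gives $G|_{y} = c(y)\,(S^{m}h \otimes \det h)|_{y}$, and the functoriality of both sides in $h|_{y}$ together with transitivity of $\GL(E_{y})$ on inner products forces $c(y)$ to be a constant $c_{r,m} > 0$ depending only on $r$ and $m$ (this is essentially the classical Narasimhan--Simha computation). Hence $G$ and $S^{m}h \otimes \det h$ agree as singular Hermitian metrics up to this constant, so that $\mathcal{E}(G) = S^{m}\mathcal{E} \otimes \det\mathcal{E}(S^{m}h \otimes \det h)$ and $\ai\Theta_{S^{m}h \otimes \det h} \geq^{s}_{\Nak} \theta \otimes \Id$. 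Applying Theorem \ref{mainthm-vanishing} to the vector bundle $S^{m}E \otimes \det E$ with the singular Hermitian metric $S^{m}h \otimes \det h$ on the weakly pseudoconvex K\"ahler manifold $(X,\omega)$, with $\omega' = \theta$, now gives
\[
H^{q}\bigl(X,\, K_{X} \otimes S^{m}\mathcal{E} \otimes \det\mathcal{E}(S^{m}h \otimes \det h)\bigr) = 0 \qquad \text{for all } q>0,
\]
which is the claim.

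The step I expect to be the main obstacle is the identification of $G$ with $S^{m}h \otimes \det h$ in the singular setting: the representation-theoretic computation is transparent over the locus where $h$ is nondegenerate, but one must argue carefully that this pins down $G$ as a singular Hermitian metric on all of $X$ — equivalently, that the two $L^{2}$-sheaves genuinely coincide — which requires controlling the behaviour of $G$ over the non-dominant, measure-zero degeneracy locus of $h$. The remaining ingredients (the cohomology and Euler-sequence computations on $\mathbb{P}(E)$, and the construction of $\theta$ by pushing $\ai\Theta_{h_{q}}$ down through $f$) are comparatively routine.
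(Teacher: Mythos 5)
Your overall strategy is the same as the paper's: apply Theorem \ref{thm-direct} to a suitable twist of the hyperplane bundle over $\mathbb{P}(E)$ to obtain $\theta$-Nakano positivity (in the sense of Definition \ref{def-singNak:sequenceNew-sheaf}) for the Narasimhan--Simha metric on $f_{*}\mathcal{O}_{\mathbb{P}(E)}(K_{\mathbb{P}(E)/X}+L)\cong S^{m}E\otimes\det E$, identify that metric with $S^{m}h\otimes\det h$, and conclude by Theorem \ref{mainthm-vanishing}. The paper quotes \cite{HPS18} for the identification $G=S^{m}h\otimes\det h$ rather than rederiving it by Schur's lemma as you do, and it simply asserts the existence of a Hermitian form $\omega'$ on $X$ with $\sqrt{-1}\Theta_{h_q}\geq f^{*}\omega'$ without giving a formula.

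One step is not quite right as written: the claim that $\theta_{y}(u,\bar u):=\inf_{x\in f^{-1}(y)}\inf\{\Omega_{x}(v,\bar v):df_{x}(v)=u\}$ defines a $(1,1)$-form on $X$. The inner infimum is fine --- it is the quotient Hermitian form on $T_{y}X$ along $df_{x}$ --- but the outer infimum of a family of positive definite Hermitian forms over the compact fibre is in general only a positive, degree-two homogeneous function of $u$, not a Hermitian form (the pointwise infimum of quadratic forms need not be quadratic). To produce an actual continuous positive $(1,1)$-form $\theta$ with $f^{*}\theta\leq\Omega$, fix a Hermitian form $\omega_{X}$ on $X$ and set $\theta=c\,\omega_{X}$ for a continuous positive function $c$ on $X$ chosen, using compactness of the fibres, so that $c(y)\,\omega_{X}|_{y}$ sits below all the quotient forms along $f^{-1}(y)$. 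This is a local, easily repaired flaw, and the remainder of your argument stands.
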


\subsection*{Acknowledgment}\label{subsec-ack}
T.\,I.\,is supported by Grant-in-Aid for Research Activity Start-up $\sharp$21K20336 and Grant-in-Aid for Early-Career Scientists $\sharp$23K12978 from Japan Society for the Promotion of Science (JSPS).
S.\,M.\,is supported 
by Grant-in-Aid for Scientific Research (B) $\sharp$21H00976 
and Fostering Joint International Research (A) $\sharp$19KK0342 from JSPS. 

\section{Definitions of Nakano positivity and preliminary results}\label{Sec-prelim}

This section is devoted to explaining the definitions of positivity for singular Hermitian metrics 
and preliminary results needed for subsequent discussions. 
The primary purpose is to introduce a new definition of Nakano positivity mentioned in Section \ref{Sec-1} (see Definition \ref{def-singNak:sequenceNew}).

Throughout this paper, 
let $X$ be a complex manifold of dimension $n$ and $E$ be a vector bundle of rank $r$ on $X$. 
Moreover, we adopt the following convention:

\begin{defi}[Associated Hermitian forms and Nakano/Griffiths positivity]
A smooth Hermitian metric $h$ on $E$ defines 
the Chern curvature and associated Hermitian form:
$$
\ai\Theta_{h} \in C^{\infty}(X, \Lambda^{1,1}\otimes \End{(E)})
\text{ and } 
\ai \widetilde \Theta_h \in C^{\infty}(X, \Herm(T_{X}\otimes E)).
$$
The metric $h$ is said to be \textit{Nakano semi-positive} 
if $\ai \widetilde \Theta_h$ is semi-positive definite for all elements $\tau\in T_X\otimes E$ 
and \textit{Griffiths semi-positive} if $\ai \widetilde \Theta_h$ is semi-positive definite 
for all decomposable elements $\xi\otimes s \in T_X\otimes E$. 

In general, for a real $(1,1)$-form $\Xi$ valued in $ \End{(E)}$, 
we can define the associated Hermitian form on $T_X\otimes E$, denoted by $\Xi_h$. 
For instance, for a real $(1,1)$-form $\theta$, 
the notation $\theta \otimes \Id_{E_{h}}$ refers to the Hermitian form associated with $\theta \otimes \Id_E$. 
These forms are often identified when there is no risk of confusion.
\end{defi}

We review the notion of Griffiths positivity for singular Hermitian metrics, 
which coincides with the definition involving curvature presented above for smooth Hermitian metrics. 

\begin{defi}\label{def-sHm}
(Singular Hermitian metrics, {\cite[Section 3]{BP08}, \cite[Definition 17.1]{HPS18}, \cite[Definition 2.2.1]{PT18}}). 
A \textit{singular Hermitian metric} $h$ on $E$ is defined as a measurable map from the base manifold $X$ 
to the space of non-negative Hermitian forms on the fibers of $E$ such that $0 < \det h < \infty$ holds almost everywhere.
\end{defi}

\begin{defi}[Griffiths positivity]
\label{def-GrifPos}
A singular Hermitian metric $h$ on $E$ is said to be {\it Griffiths semi-positive} 
if $\log |u|_{h^{*}}$ is psh (plurisubharmonic) 
for any local (holomorphic) section $u$ of $E^{*}$, 
where $h^{*}$ denotes the induced metric on the dual vector bundle $E^{*}$. 
\end{defi}

The following definition, inspired by \cite[Definition 2.4.1]{deC98} and \cite[Definition 1.1]{GMY23}, 
introduces a new approach to defining singular Nakano positivity via an approximation. 

\begin{defi}[Nakano positivity in the sense of approximations]\label{def-singNak:sequenceNew}
Let $\omega$ be a Hermitian form and $\theta$ be a continuous  real $(1,1)$-form on $X$. 
Consider a singular Hermitian metric $h$ on $E$ with $h^*$ being upper semi-continuous 
(i.e.,\,the function $|u|^2_{h^*}$ is upper semi-continuous for any local  section $u$ of $E^*$). 
We define \textit{the $\theta$-Nakano positivity in the sense of approximations}, denoted by 
\[
\ai\Theta_{h} \geq^s_{\Nak} \theta \otimes \Id_{E_h},
\]
by the existence of 
the following data: $(\{ X_j\}_{j=1}^{\infty}, \{ \Sigma_{j,s}\}_{j,s=1}^{\infty}, \{ h_{j,s}\}_{j,s=1}^{\infty})$ consisting of  
\begin{itemize}
\item $\{ X_j \}_{j=1}^{\infty}$ is an open cover of $X$ such that 
$X_j\Subset X_{j+1}\Subset X$ for any $j \in \mathbb{Z}_{+}$; 
\item $\Sigma_{j,s}\subset X_j$ is a (proper closed) analytic subset of $X_{j}$; 
\item $h_{j,s}$ is a $C^{2}$-Hermitian metric on $E|_{X_{j} \setminus \Sigma_{j,s} }$; 
\end{itemize}
and satisfying Conditions (a) and (b):
\begin{enumerate}
\item[(a)] Set $\Sigma_{j}:=\cup_{s=1}^{\infty}\Sigma_{j,s}$. 
Then, for any $x\in X_j\setminus \Sigma_{j}$ and $e\in E_x$, we have 
$$
|e|_{h_{j,s}}\nearrow |e|_{h} \text{ as } s \nearrow \infty. 
$$ 
\item[(b)] There exist continuous functions $\lambda_{j,s}$ and $\lambda_j$ on $\overline{X}_{j}$ 
such that 
\begin{itemize}
\item $0\leq \lambda_{j,s}\leq \lambda_j$ on $X_j$; 
\item  $\lambda_{j,s}\to 0$ almost everywhere on $X_j$; 
\item $\ai\Theta_{h_{j,s}}\geq_\Nak (\theta-\lambda_{j,s}\omega)\otimes \Id_{E_{h_{j,s}}}$ 
holds at any point $x \in X_j\setminus \Sigma_{j}$. 
\end{itemize}
\end{enumerate}
\end{defi}
Note that this definition does not depend on the choice of $\omega$ by $X_{j} \Subset X$.

\begin{rem}
(1) In contrast to \cite[Definition 1.1]{GMY23}, 
our definition does not require  $h_{j,s}$ to be $C^{2}$ on $X_{j}$, 
allowing more flexibility in addressing Nakano positivity.
\\
(2) 
The inequality \( \ai\Theta_{h_{j,s}} \geq_\Nak (\theta-\lambda_{j,s}\omega) \otimes \Id_{E_{h_{j,s}}} \) in Condition (b) is valid on \( X_{j} \setminus \Sigma_{j,s} \). For illustration, we consider a smooth Hermitian metric \( h \) on \( E \) satisfying \( \ai\Theta_{h} \geq_\Nak \theta \otimes \Id_{E_{h}} \) at any point \( x \in X \setminus \Sigma \), where \( \Sigma \subset X \) is a subset. It can be easily verified that the condition \( \ai\Theta_{h} \geq_\Nak \theta \otimes \Id_{E_{h}} \) remains valid at any point in the closure of \( X \setminus \Sigma \).

\end{rem}

An alternative definition of Nakano positivity, as introduced in \cite{Ina22} and building upon \cite{Ber98, HI21} and most essentially \cite{DNWZ23}, 
employs the $\overline{\partial}$-equation with optimal $L^2$-estimates. 
As mentioned in Section \ref{Sec-1}, Theorem \ref{thm-seqimpliesL2} demonstrates that 
Definition \ref{def-singNak:sequenceNew} implies Definition \ref{def-singNak:l2global}.

\begin{defi}[{\cite[Definitions 1.1, 1.2]{Ina22}}]\label{def-singNak:l2global}
Consider the same situation as in Definition \ref{def-singNak:sequenceNew}. 

(1) We define \textit{the global $\theta$-Nakano positivity in the sense of $L^2$-estimates}, denoted by 
    \[
    \ai\Theta_h \geq^{L^2}_{\glo\Nak} \theta \otimes \Id_{E_h} \text{ on } X,
    \]
by the following condition:
For any data consisting of 
\begin{itemize}
\item a Stein coordinate $\Omega$ admitting a trivialization $E|_\Omega\cong \Omega\times \C^r$; 
\item a K\"ahler form $\omega_\Omega$ on $\Omega$; 
\item a smooth function $\psi$ on $\Omega$ such that $ \theta + \deldel \psi >0$; 
\item a positive integer $q$ with $1\leq q\leq n$; 
\item a $\overline{\partial} $-closed $v\in L^2_{n,q}(\Omega, E; \omega_\Omega, he^{-\psi})$ 
with 
$$\int_\Omega \langle B^{-1}_{\omega_\Omega,\psi,\theta\otimes \Id_E}v, v\rangle_{\omega_\Omega,h}e^{-\psi} dV_{\omega_\Omega}<\infty,$$ 
\end{itemize}
 there exists $u\in L^2_{n,q-1}(\Omega, E; \omega_\Omega, he^{-\psi})$ such that 
    \[
     \overline{\partial}  u=v \text{ and } 
    \int_\Omega |u|^2_{\omega_\Omega, h}e^{-\psi} dV_{\omega_\Omega}\leq \int_\Omega \langle B^{-1}_{\omega_\Omega,\psi,\theta\otimes \Id_E}v, v\rangle_{\omega_\Omega,h}e^{-\psi} dV_{\omega_\Omega},
    \]
    where $B_{\omega_\Omega,\psi,\theta\otimes \Id_E}=[\deldel\psi\otimes \Id_E+\theta\otimes \Id_E,\Lambda_{\omega_\Omega}]$. 
\smallskip 
\\
(2) 
We define \textit{the local $\theta$-Nakano positivity in the sense of $L^2$-estimates}, denoted by 

    \[
    \ai\Theta_h \geq^{L^2}_{\loc\Nak} \theta \otimes \Id_{E_h} \text{ on } X,
    \]
by the condition: For any point $x\in X$, there exists an open neighborhood $U$ of $x$ such that 
    \[
    \ai\Theta_h \geq^{L^2}_{\glo\Nak} \theta \otimes \Id_{E_h} \text{ on } U.
    \]
\end{defi}
In this paper, we focus on only global Nakano positivity. 
Therefore, for simplicity, we will write 
\[
\ai \Theta_h \geq^{L^2}_{\Nak} \theta\otimes \Id_{E_h} \text{ instead of  }
\ai \Theta_h \geq^{L^2}_{\glo\Nak} \theta \otimes \Id_{E_h}. 
\]

For smooth Hermitian metrics, it is a direct consequence of the definitions that Nakano positivity implies Griffiths positivity. 
This relation can be similarly extended to singular Hermitian metrics after a slight modification.

\begin{prop}\label{prop-NakimpliesGrif}
    Let $h$ be a singular Hermitian metric on $E$ such that 
    \[
    \ai\Theta_h \geq^s_{\Nak} 0 \text{ holds in the sense of Definition \ref{def-singNak:sequenceNew}}. 
    \]
Assume that $\Sigma_j=\cup_s\Sigma_{j,s}$ is a closed pluripolar set for $j \gg 0$.
Then $h$ is a.e.\,Griffiths semi-positive 
$($i.e.,\,the function $\log |u|_{h^{*}}$ is a.e.\,psh 
for any local holomorphic section $u$ of $E^{*}$$)$.  
\end{prop}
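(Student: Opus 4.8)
The plan is to deduce the statement from the smooth case applied to each $h_{j,s}$: pass to the monotone limit in $s$ on $X_j\setminus\Sigma_j$, and then remove $\Sigma_j$ as a pluripolar singular set. Fix a local holomorphic section $u$ of $E^{*}$ (we may assume $u\not\equiv0$) and work on one $X_{j}$ for which $\Sigma_{j}:=\bigcup_{s}\Sigma_{j,s}$ is closed pluripolar; such $X_{j}$ still cover $X$. Put $v_{s}:=\log|u|^{2}_{h^{*}_{j,s}}$ and $v:=\log|u|^{2}_{h^{*}}$. On $X_{j}\setminus\Sigma_{j,s}$ the metric $h_{j,s}$ is an honest ($C^{2}$, positive definite) Hermitian metric with $\ai\Theta_{h_{j,s}}\geq_{\Nak}(-\lambda_{j,s}\omega)\otimes\Id$; hence it is Griffiths semi-positive modulo $-\lambda_{j,s}\omega$, hence the dual $h^{*}_{j,s}$ is Griffiths semi-negative modulo $\lambda_{j,s}\omega$, and the classical curvature estimate for the holomorphic section $u$ of $(E^{*},h^{*}_{j,s})$ gives, as currents (the inequality persisting across $\{u=0\}$ by the Lelong--Poincar\'e formula),
\[
\ai\partial\overline{\partial}\,v_{s}\ \geq\ -\,\lambda_{j,s}\,\omega\qquad\text{on }X_{j}\setminus\Sigma_{j,s}.
\]
Since $\Sigma_{j}$ is closed, $X_{j}\setminus\Sigma_{j}$ is an open subset of every $X_{j}\setminus\Sigma_{j,s}$, so this holds there for all $s$; moreover Condition (a) gives $h_{j,s}\leq h_{j,s+1}\leq h$ as Hermitian forms, hence $v_{s}\searrow v$ almost everywhere on $X_{j}\setminus\Sigma_{j}$.

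Next I would pass to the limit on a small ball $B\Subset X_{j}\setminus\Sigma_{j}$. Choosing $\varphi=C|z|^{2}$ with $\deldel\varphi\geq\lambda_{j}\omega$ on $B$ (legitimate since $0\leq\lambda_{j,s}\leq\lambda_{j}$ with $\lambda_{j}$ continuous), each $v_{s}+\varphi$ is psh on $B$, the sequence decreases, and its pointwise limit is $\not\equiv-\infty$ because $0<\det h<\infty$ a.e.\ forces $v>-\infty$ a.e.; thus the limit is psh, it agrees with $v+\varphi$ a.e., and in particular $v\in L^{1}_{\mathrm{loc}}(B)$. Now test against a nonnegative $(n-1,n-1)$-form $\phi$ with $\Supp\phi\Subset B$: on the one hand $\int_{B}\ai\partial\overline{\partial}v_{s}\wedge\phi=\int_{B}v_{s}\,\ai\partial\overline{\partial}\phi\to\int_{B}v\,\ai\partial\overline{\partial}\phi=\int_{B}\ai\partial\overline{\partial}v\wedge\phi$ by dominated convergence (using $v\leq v_{s}\leq v_{1}$, all in $L^{1}_{\mathrm{loc}}$), and on the other hand $\int_{B}\ai\partial\overline{\partial}v_{s}\wedge\phi\geq-\int_{B}\lambda_{j,s}\,\omega\wedge\phi\to0$, since $\lambda_{j,s}\to0$ a.e.\ with $0\leq\lambda_{j,s}\leq\lambda_{j}$. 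Therefore $\ai\partial\overline{\partial}v\geq0$ on $B$; as $v$ is upper semi-continuous (the standing assumption on $h^{*}$) and lies in $L^{1}_{\mathrm{loc}}(B)$, it coincides a.e.\ with a psh function on $B$. Gluing over a ball cover of $X_{j}\setminus\Sigma_{j}$ yields a psh function $\widetilde v$ on $X_{j}\setminus\Sigma_{j}$ with $\widetilde v=v$ a.e. This limiting step is the crux, and the only place the hypotheses are used in full strength: one must turn the family of \emph{approximate} sub-mean-value inequalities into an exact one, exploiting that $\lambda_{j,s}\to0$ only almost everywhere (upgraded to $L^{1}_{\mathrm{loc}}$ convergence by the dominating $\lambda_{j}$) together with the monotone convergence $v_{s}\searrow v$, while knowing a priori that $v\not\equiv-\infty$.

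Finally I would remove $\Sigma_{j}$. Near any of its points one has $\widetilde v\leq v\leq v_{1}=\log|u|^{2}_{h^{*}_{j,1}}$ a.e.\ with $v_{1}$ continuous, so $\widetilde v$ is locally bounded above there; since $\Sigma_{j}$ is closed pluripolar, the removable-singularity theorem for plurisubharmonic functions extends $\widetilde v$ to a psh function $\widehat v$ on $X_{j}$, and because pluripolar sets are Lebesgue-null, $\widehat v=v$ a.e.\ on $X_{j}$. Hence $\log|u|_{h^{*}}=\tfrac12 v$ is a.e.\ psh on each such $X_{j}$, and since these $X_{j}$ cover $X$ and being a.e.\ psh is a local condition (two psh functions agreeing a.e.\ agree everywhere, so the local psh representatives glue), $\log|u|_{h^{*}}$ is a.e.\ psh on $X$. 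The pluripolar removal is routine; the substance is in the second paragraph.
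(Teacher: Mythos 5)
Your proof is correct and follows the same strategy as the paper: show $\log|u|_{h^*}$ is psh on $X_j\setminus\Sigma_j$, then extend across the closed pluripolar set $\Sigma_j$ by the removable-singularity theorem. Your middle paragraph reproves from scratch what the paper obtains by citing \cite[Proposition 2.3]{GMY22}; that argument is fine. One small slip in the removal step: you bound $\widetilde v$ above near $\Sigma_j$ via continuity of $v_1=\log|u|^2_{h^*_{j,1}}$, but $v_1$ is only defined on $X_j\setminus\Sigma_{j,1}$, and Definition \ref{def-singNak:sequenceNew} imposes no control on $h_{j,1}$ as one approaches $\Sigma_{j,1}$, so $v_1$ could blow up there. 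The correct source of the local upper bound, as the paper states, is the standing assumption that $h^*$ is upper semi-continuous, which makes $v=\log|u|^2_{h^*}$ usc and hence locally bounded above on all of $X_j$; since $\widetilde v=v$ a.e.\ and $\widetilde v$ is psh, $\widetilde v(x)=\limsup_{y\to x}\widetilde v(y)$ inherits the bound.
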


\begin{proof}
Consider a holomorphic section $u \in H^{0}(U, E^{*})$ defined on an open subset $U\Subset X$ and 
take a member $X_j$ from the open cover $\{X_{j}\}_{j=1}^{\infty}$ in Definition \ref{def-singNak:sequenceNew} such that $U \subset X_j $. 
Then, \cite[Proposition 2.3]{GMY22} indicates that $\log |u|_{h^*}$  is psh on  $U\setminus \Sigma_j$. 
Since $h^*$ is upper semi-continuous, the function $\log |u|^2_{h^*}$ is locally bounded above. 
Hence, there exists a psh function $v$ on $U$ such that  $v=\log |u|_{h^*}$ on $U\setminus \Sigma_j$.
(see \cite[Chapter I, Theorem 5.24]{Dem-book} for example). 
\end{proof}

\begin{rem}\label{rem-NakimpliesGrif}
Consider the following straightforward example, which shows that $h$ in the above proposition is not necessarily Griffiths semi-positive:
Let $E=\Delta\times \C$ be the trivial line bundle over the unit disc $X:=\Delta \subset \mathbb{C}^{1}$ 
and define the function $\varphi$  on $\Delta$  such that $\varphi(0)=1$ and $\varphi(z)=0$ for $z\in \Delta\setminus \{ 0\}$. 
We can see that the singular Hermitian metric $h:=e^{-\varphi}$ on $\Delta\times\C$ satisfies that 
\[
 \ai\Theta_h \geq^s_{\Nak} 0 \text{ in the sense of Definition \ref{def-singNak:sequenceNew}}
 \]
by choosing $X_j=\Delta$, $\Sigma_{j,s}=\{ 0\}$, and $h_{j,s}=e^{-\varphi}|_{X_{j}\setminus \{ 0\}}=1$ for all $j$ and $s$. 
However, the function $\varphi$ is not psh. 
\end{rem}

At the end of this section, we recall the lemma essential for the proof of Theorem \ref{thm-seqimpliesL2}.

\begin{lemm}[{\cite[Lemma 9.10]{GMY23}}]\label{lem-l2correcting}
    Let $X$ be a complete K\"ahler manifold with a $($not necessarily complete$)$ K\"ahler metric $\omega$, 
    and $(Q,h)$ be a vector bundle $Q$ with a smooth Hermitian metric $h$. 
Let   $\eta > 0$ and $g >0 $ be smooth functions on $X$ such that $(\eta+g)$ and $(\eta+g)^{-1}$ are bounded. 
Let $\lambda\geq 0$ be a bounded continuous function on $X$ such that $(B+\lambda I)$ is positive definite everywhere on $\wedge^{n,q}T^* X\otimes Q$, 
where $B:=[\eta\ai\Theta_Q-\deldel\eta-\ai g\partial\eta\wedge\overline{\partial} \eta, \Lambda_\omega]$.

Then, for a given form $v\in L^2_{n,q}(X, Q; \omega, h)$ 
with 
$$\text{
$\overline{\partial}  v=0$ and $\int_X \langle (B+\lambda I)^{-1}v, v\rangle_{\omega, h}dV_\omega<\infty$, 
 }$$  
   there exist an approximate solution $u\in L^2_{n,q-1}(X, Q; \omega, h)$ and a correcting term 
   $\tau \in L^2_{n,q}(X, Q; \omega, h)$ such that 
$\overline{\partial}  u+P_h (\sqrt{\lambda}\tau )=v$ and   \[
\int_X (\eta+g^{-1})^{-1}|u|^2_{\omega,h} dV_\omega+ \int_X |\tau|^2_{\omega,h}dV_\omega \leq \int_X \langle (B+\lambda I)^{-1}v, v\rangle_{\omega,h} dV_\omega, 
    \]
where $P_h\colon  L^2_{n,q}(X, Q; \omega, h)\to \Ker \overline{\partial} $ is the orthogonal projection.

\end{lemm}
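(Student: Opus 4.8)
The plan is to run the classical $L^2$-existence method --- a weighted Bochner--Kodaira--Nakano a priori inequality followed by a Hahn--Banach/Riesz argument --- with one modification: since $B$ itself need not be positive and only $B+\lambda I$ is, the $\overline{\partial}$-equation cannot be solved exactly, so the functional-analytic step is carried out on the product Hilbert space $L^2_{n,q-1}(X,Q;\omega,h)\oplus L^2_{n,q}(X,Q;\omega,h)$; the second factor is what yields the correction term $\tau$, and the projection $P_h$ appears because this correction is forced to lie in $\Ker\overline{\partial}$.

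First I would establish the a priori estimate. Assuming for the moment that $\omega$ is complete, the weighted Bochner--Kodaira--Nakano identity with multiplier $\eta$, combined with a Cauchy--Schwarz estimate of the cross term involving $\partial\eta$ with weight parameter $g$, gives
\[
\langle B\alpha,\alpha\rangle_{\omega,h}\ \le\ \big\|(\eta+g^{-1})^{1/2}\,\overline{\partial}^{\,*}_h\alpha\big\|^2_{\omega,h}+\big\|\eta^{1/2}\,\overline{\partial}\alpha\big\|^2_{\omega,h}
\]
for all smooth compactly supported $(n,q)$-forms $\alpha$ with values in $Q$, where $B=[\eta\ai\Theta_Q-\deldel\eta-\ai g\,\partial\eta\wedge\overline{\partial}\eta,\Lambda_\omega]$ is as in the statement and the boundedness of $(\eta+g)^{\pm1}$ makes all the weights mutually comparable. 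Moving the term $\lambda I$ to the right-hand side, and using completeness of $\omega$ to pass from compactly supported forms to $\mathrm{Dom}(\overline{\partial})\cap\mathrm{Dom}(\overline{\partial}^{\,*}_h)$ by density in the graph norm, I obtain for every $\alpha\in\Ker\overline{\partial}\cap\mathrm{Dom}(\overline{\partial}^{\,*}_h)$ the inequality $\langle(B+\lambda I)\alpha,\alpha\rangle_{\omega,h}\le \|(\eta+g^{-1})^{1/2}\overline{\partial}^{\,*}_h\alpha\|^2_{\omega,h}+\|\sqrt{\lambda}\,\alpha\|^2_{\omega,h}$.

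Next, writing $C:=\int_X\langle(B+\lambda I)^{-1}v,v\rangle_{\omega,h}\,dV_\omega$, the pointwise Cauchy--Schwarz inequality for the positive definite form $B+\lambda I$ together with the previous estimate bounds, for $\alpha\in\Ker\overline{\partial}\cap\mathrm{Dom}(\overline{\partial}^{\,*}_h)$, the quantity $|\langle v,\alpha\rangle_{\omega,h}|^2$ by $C\big(\|(\eta+g^{-1})^{1/2}\overline{\partial}^{\,*}_h\alpha\|^2_{\omega,h}+\|\sqrt{\lambda}\,\alpha\|^2_{\omega,h}\big)$. Hence the assignment
\[
\big((\eta+g^{-1})^{1/2}\,\overline{\partial}^{\,*}_h\alpha,\ \sqrt{\lambda}\,\alpha\big)\ \longmapsto\ \langle v,\alpha\rangle_{\omega,h}
\]
is a well-defined bounded linear functional, of norm squared at most $C$, on the subspace of $L^2_{n,q-1}(X,Q;\omega,h)\oplus L^2_{n,q}(X,Q;\omega,h)$ spanned by such pairs --- well-definedness holds because if both components vanish almost everywhere then $\langle(B+\lambda I)\alpha,\alpha\rangle\le 0$, forcing $\alpha=0$ by positive definiteness of $B+\lambda I$. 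By Hahn--Banach and the Riesz representation theorem this functional is represented by a pair $(\widetilde u,\widetilde\tau)$ with $\|\widetilde u\|^2_{\omega,h}+\|\widetilde\tau\|^2_{\omega,h}\le C$. Setting $u:=(\eta+g^{-1})^{1/2}\,\widetilde u$ and $\tau:=\widetilde\tau$ gives $\int_X(\eta+g^{-1})^{-1}|u|^2_{\omega,h}\,dV_\omega+\int_X|\tau|^2_{\omega,h}\,dV_\omega=\|\widetilde u\|^2_{\omega,h}+\|\widetilde\tau\|^2_{\omega,h}\le C$, together with a weak identity of the shape $\langle v,\alpha\rangle_{\omega,h}=\langle\overline{\partial}^{\,*}_h\alpha,u\rangle_{\omega,h}+\langle\alpha,\sqrt{\lambda}\,\tau\rangle_{\omega,h}$ valid for all $\alpha\in\Ker\overline{\partial}\cap\mathrm{Dom}(\overline{\partial}^{\,*}_h)$. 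To upgrade this to $\overline{\partial}u+P_h(\sqrt{\lambda}\,\tau)=v$ I would invoke the standard fact that any $\alpha\in\mathrm{Dom}(\overline{\partial}^{\,*}_h)$ orthogonal to $\Ker\overline{\partial}$ satisfies $\overline{\partial}^{\,*}_h\alpha=0$ (since $\langle\overline{\partial}^{\,*}_h\alpha,\beta\rangle=\langle\alpha,\overline{\partial}\beta\rangle=0$ for $\beta\in\mathrm{Dom}(\overline{\partial})$, as $\overline{\partial}\beta\in\Ker\overline{\partial}$); decomposing an arbitrary $\alpha=P_h\alpha+(\alpha-P_h\alpha)$ and using that $v$ and $P_h(\sqrt{\lambda}\,\tau)$ lie in $\Ker\overline{\partial}$, the identity extends to all $\alpha\in\mathrm{Dom}(\overline{\partial}^{\,*}_h)$, which is precisely the assertion that $u\in\mathrm{Dom}(\overline{\partial})$ with $\overline{\partial}u=v-P_h(\sqrt{\lambda}\,\tau)$.

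Finally, for a non-complete $\omega$ I would carry out the above with $\omega_{\e}:=\omega+\e\,\widehat{\omega}$ for a fixed complete K\"ahler metric $\widehat{\omega}$ on $X$, producing solutions $u_{\e},\tau_{\e}$, and then let $\e\to 0$: the monotonicity in the metric of the quantity $\langle[\,\cdot\,,\Lambda_\omega]^{-1}v,v\rangle_{\omega}\,dV_\omega$ for $(n,q)$-forms keeps the right-hand side $C_{\e}$ controlled by the original $C$ uniformly in $\e$, uniform pointwise comparisons of norms give $L^2_{\mathrm{loc}}$-bounds (with respect to the fixed $\omega$) for $u_{\e}$ and $\tau_{\e}$, and weak limits then satisfy the equation and the desired estimate by weak lower semicontinuity. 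I expect the principal difficulty to lie in Step~1 in the exact form required --- matching the weighted inequality to the prescribed $B=[\eta\ai\Theta_Q-\deldel\eta-\ai g\,\partial\eta\wedge\overline{\partial}\eta,\Lambda_\omega]$ demands care in allocating the multipliers $\eta$ and $g$ --- together with verifying in the limit $\e\to0$ that replacing $\omega$ by $\omega_{\e}$ neither destroys the positive definiteness needed to run Steps~1--3 at each level $\e$ nor inflates the limiting right-hand side. The product-space Hahn--Banach step is, by contrast, routine once the a priori estimate is in hand.
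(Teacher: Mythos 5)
The paper itself does not prove this lemma: it is imported verbatim from \cite[Lemma 9.10]{GMY23}, so there is no in-paper proof to compare against. That said, your plan is sound and is the standard route to this kind of statement --- the twisted Bochner--Kodaira--Nakano inequality with multipliers $\eta,g$, followed by a Riesz/Hahn--Banach argument carried out on the product Hilbert space $L^2_{n,q-1}\oplus L^2_{n,q}$ to absorb the non-positivity of $B$ into the correction term $\tau$, and a final $\omega_\e=\omega+\e\widehat\omega$ completion argument. This is precisely the mechanism underlying the GMY proof (itself a descendant of the Ohsawa--Takegoshi/Demailly/Siu framework), so you have recovered the referenced argument essentially correctly.

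Two small points you should tighten if you write this out in full. First, the a priori inequality should be stated in integrated form, $\int_X\langle B\alpha,\alpha\rangle_{\omega,h}dV_\omega\leq\int_X(\eta+g^{-1})|\overline{\partial}^*_h\alpha|^2dV_\omega+\int_X\eta|\overline{\partial}\alpha|^2dV_\omega$, and one must check that, after dropping the $\|\eta^{1/2}\overline\partial\alpha\|^2$ term on $\Ker\overline\partial$ and adding $\int\lambda|\alpha|^2$ to both sides, the density in graph norm really does give the estimate for every $\alpha\in\Ker\overline\partial\cap\mathrm{Dom}(\overline{\partial}^*_h)$, not merely for smooth compactly supported forms --- this is where completeness enters and it is worth spelling out. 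Second, in the completion step $\e\to 0$, you must verify that $B_\e+\lambda I$ (formed with $\Lambda_{\omega_\e}$) remains positive definite on $(n,q)$-forms for all small $\e>0$ and that the associated right-hand side $\int\langle(B_\e+\lambda I)^{-1}v,v\rangle_{\omega_\e}dV_{\omega_\e}$ stays bounded by the one for $\omega$; the standard monotonicity of $[\Theta,\Lambda_\omega]$-type quadratic forms on $(n,q)$-forms under enlargement of $\omega$ handles the curvature piece, but the term $\lambda I$ transforms differently under metric change and needs a separate (easy but explicit) estimate. You already flagged both of these as the likely sticking points, which is accurate; neither is a gap in the approach, only in the level of detail.
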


\section{Proof of Theorem \ref{thm-seqimpliesL2}}

This section is devoted to proving Theorem \ref{thm-seqimpliesL2}.
Using the same notation as in Definition \ref{def-singNak:sequenceNew}, 
we first confirm preliminary results on $L^{2}$-spaces. 
For a section $u\in L^2_{n,q}(X_j\setminus\Sigma_{j,s}, E; \omega, h_{j,s})$, 
we define the {\textit{trivial extension}} $\widetilde{u}$ of $u$  by setting $\widetilde{u}=0$ on $\Sigma_{j,s}$. 
Similarly, we also define the trivial extension $\widetilde{h}_{j,s}$ of $h_{j,s}$.
Using these extensions, we obtain the isomorphism:
\begin{align}\label{eq-isom}
L^2_{n,q}(X_j\setminus\Sigma_{j,s}, E; \omega, h_{j,s}) \cong L^2_{n,q}(X_j, E; \omega, \widetilde{h}_{j,s}). 
\end{align}
Let $P_s$ and $P'_s$ be the first projections of the following orthogonal decomposition: 
\begin{align*}
    L^2_{n,q}(X_j\setminus\Sigma_{j,s}, E; \omega, h_{j,s})&= (\Ker \Dbar)_s \oplus (\Ker \Dbar)_s^{\perp},\\
    L^2_{n,q}(X_j, E; \omega, \widetilde{h}_{j,s}) &= (\Ker \Dbar)_{s'} \oplus (\Ker \Dbar)_{s'}^{\perp}.
\end{align*}
Here, the subscripts $s$ and $s'$ are only used to distinguish the spaces of $\Ker \Dbar$.
If  $u \in (\Ker \Dbar)_s $, 
then $\widetilde{u}\in (\Ker \Dbar)_{s'}$ by \cite[Lemma 6.9]{Dem82} 
and similarly if $v\in (\Ker\Dbar)_s^{\bot}$, then $\widetilde{v}\in (\Ker\Dbar)_{s'}^{\bot}$, 
which means that the above projections and $\Dbar$-operators are compatible with the trivial extensions. 
Consequently, via the isomorphism \eqref{eq-isom}, 
we can identify the $L^{2}$-space equipped with the projection 
on $X_{j} \setminus \Sigma_{j,s}$ with the corresponding $L^{2}$-space on $X_{j}$. 
Under this identification, for simplicity we write $h_{j,s}=\widetilde{h}_{j,s}$ and use the following notation
\[
L^2_{n,q}(X_j, E; \omega, h_{j,s}). 
\]
Since $\{ {h}_{j,s}\}_s$ increasingly converges to $h$ almost everywhere on $X_j$, 
we can see that 
\[
L^2_{n,q}(X_j, E; \omega, h) \subset L^2_{n,q}(X_j, E; \omega, h_{j,s})\subset L^2_{n,q}(X_j, E; \omega, h_{j,t})
 \text{ \quad for $t \leq s$}.
\]
This observation leads to the following lemma.

\begin{lemm}[{\cite[Lemma 9.1]{GMY23}}]\label{lem-GMY1}
Consider a sequence $\{ u_\ell \}_{\ell=1}^{\infty}$ that weakly converges to $u$ 
in $L^2_{n,q}(X_j, E; \omega, h_{j,s})$ and 
a sequence $\{ v_\ell\}_{\ell=1}^{\infty}$  of functions on $X_{j}$ that converges pointwise to $v$. 
If $\sup_{X_{j}} |v_\ell|$ is uniformly bounded in $\ell$, then $\{ v_\ell u_\ell\}_{\ell=1}^{\infty}$ weakly converges to $uv$ 
in $L^2_{n,q}(X_j, E; \omega, h_{j,s})$. 
\end{lemm}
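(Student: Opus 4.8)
\textbf{Proof plan for Lemma \ref{lem-GMY1}.}

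The plan is to reduce the claim to the definition of weak convergence by testing $\{v_\ell u_\ell\}$ against an arbitrary element $w \in L^2_{n,q}(X_j, E; \omega, h_{j,s})$ and showing that $\langle v_\ell u_\ell, w\rangle \to \langle v u, w\rangle$. First I would split the difference as
\[
\langle v_\ell u_\ell, w\rangle - \langle vu, w\rangle = \langle u_\ell, (v_\ell - v) w\rangle + \langle u_\ell, vw\rangle - \langle u, vw\rangle + \langle (v-v_\ell) u, w \rangle + \cdots,
\]
but more cleanly I would write $\langle v_\ell u_\ell, w\rangle - \langle vu,w\rangle = \langle u_\ell - u, \overline{v}\, w\rangle + \langle u_\ell, (\overline{v_\ell}-\overline{v}) w\rangle$ (all pointwise scalar multiplications, so the inner products are with respect to the fixed metric $h_{j,s}$ and the fixed volume form $dV_\omega$). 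The first term tends to $0$ because $u_\ell \rightharpoonup u$ weakly and $\overline{v}\,w$ is a fixed element of $L^2_{n,q}(X_j, E; \omega, h_{j,s})$ — here one uses that $v$ is a pointwise limit of the uniformly bounded $v_\ell$, hence $|v| \le \sup_\ell \sup_{X_j} |v_\ell| =: M < \infty$ almost everywhere, so $\overline{v}\, w$ indeed lies in the same $L^2$-space.

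For the second term $\langle u_\ell, (\overline{v_\ell}-\overline{v}) w\rangle$, the point is that $(\overline{v_\ell} - \overline{v}) w \to 0$ \emph{strongly} in $L^2_{n,q}(X_j, E; \omega, h_{j,s})$: the integrand $|(\overline{v_\ell}-\overline{v})w|^2_{\omega, h_{j,s}} \le (2M)^2 |w|^2_{\omega, h_{j,s}}$ is dominated by an integrable function, and it tends to $0$ pointwise a.e.\ since $v_\ell \to v$ pointwise, so dominated convergence gives $\|(\overline{v_\ell}-\overline{v})w\|_{h_{j,s}} \to 0$. A weakly convergent sequence is norm-bounded, so $\sup_\ell \|u_\ell\|_{h_{j,s}} =: C < \infty$, and then by Cauchy--Schwarz $|\langle u_\ell, (\overline{v_\ell}-\overline{v})w\rangle| \le C \|(\overline{v_\ell}-\overline{v})w\|_{h_{j,s}} \to 0$. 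Combining the two estimates shows $\langle v_\ell u_\ell, w\rangle \to \langle vu, w\rangle$ for every $w$, which is exactly weak convergence $v_\ell u_\ell \rightharpoonup vu$.

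I expect no serious obstacle here; the only points requiring a little care are (i) confirming that all the products $v u$, $\overline{v}\,w$, $v_\ell u_\ell$ genuinely lie in the stated $L^2$-space, which follows from the uniform bound $M$ on $|v_\ell|$ and $|v|$; and (ii) the invocation of the uniform boundedness principle to get $\sup_\ell \|u_\ell\| < \infty$ from weak convergence. The dominated convergence step is routine once the uniform bound is in place. (One could alternatively cite \cite[Lemma 9.1]{GMY23} verbatim, but the argument above is short enough to include.)
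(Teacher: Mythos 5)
Your argument is correct and is exactly the standard proof one would expect: the paper itself does not prove this lemma but simply cites \cite[Lemma 9.1]{GMY23}, and the decomposition $\langle v_\ell u_\ell - vu, w\rangle = \langle u_\ell - u, \overline{v}\,w\rangle + \langle u_\ell, (\overline{v_\ell}-\overline{v})w\rangle$ together with dominated convergence, Cauchy--Schwarz, and uniform boundedness of weakly convergent sequences is precisely the right way to close it. The only cosmetic remark is that your first, abandoned decomposition is not quite an identity (the terms don't balance as written), but since you discard it immediately in favor of the clean two-term split, this does not affect the proof.
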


By \cite[Section 9]{GMY23} (see also \cite{GMY22}), 
we have the linear isometries
\begin{align*}
    H_s&\colon L^2_{n,q}(X_j, E; \omega, h_{j,s}) \to L^2_{n,q}(X_j, E; \omega, h_{j,1}),\\
    H&\colon L^2_{n,q}(X_j, E; \omega, h) \to L^2_{n,q}(X_j, E; \omega, h_{j,1}).
\end{align*}
Precisely speaking, in \cite{GMY23}, the metric $h_{j,s}$ is assumed to be smooth on $X_{j}$ 
(not only $X_{j} \setminus \Sigma_{j,s}$). 
Nevertheless,  since $\Sigma_j$  is of measure zero, 
the linear isometries constructed for the $L^{2}$-spaces on $X_{j} \setminus \Sigma_{j,s}$ 
determines the above linear isometries. 
Using them, we obtain the following lemma:

\begin{lemm}\label{lem-GMY2}
$(${\cite[Lemma 9.9]{GMY23}, \cite[Lemma 2.2]{GMY22}, 
{\rm cf.}\,\cite[Proposition 3.4]{Mat22}, \cite[Proposition 3.5]{Mat18}}$)$. 
Consider a sequence $\{ f_s \}_{s=1}^{\infty}$ such that 
$f_s\in L^2_{n,q}(X_j, E; \omega, h_{j,s})$ and 
$\| f_s\|_s \leq C$ holds for a uniform constant $C$. 
Then, there exists $f \in L^2_{n,q}(X_j, E; \omega, h)$ and 
a subsequence of $\{ f_s\}_s$ $($for which we use the same notation$)$
with the following properties: 
\begin{itemize}
\item $\| f\|_{h} \leq C$; 
\item For $t \in \mathbb{Z}_{+}$, 
the sequence $\{ f_s\}_s$  weakly converges to $f$ in $L^2_{n,q}(X_j, E; \omega, h_{j,t})$; 
\item $\{P_s(f_s)\}_s$ weakly converges to $P(f_0)$ in $L^2_{n,q}(X_j, E; \omega, h_{j,t})$. 
\end{itemize}
Here $P\colon L^2_{n,q}(X_j, E; \omega, h)\to \Ker\Dbar$ is the orthogonal projection.
\end{lemm}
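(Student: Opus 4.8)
The plan is to transport the whole problem into the single fixed Hilbert space $\mathcal H:=L^2_{n,q}(X_j,E;\omega,h_{j,1})$ by means of the isometries $H_s$ and $H$ recalled above, to run a routine weak-compactness argument there, and then to push the weak limit back down to $L^2_{n,q}(X_j,E;\omega,h)$. Concretely, $H_s$ (resp.\ $H$) is multiplication by $A_s^{1/2}$ (resp.\ $A^{1/2}$), where $A_s$ (resp.\ $A$) is the relative endomorphism of $h_{j,s}$ (resp.\ $h$) with respect to $h_{j,1}$, i.e.\ $\langle\,\cdot\,,\,\cdot\,\rangle_{h_{j,s}}=\langle A_s\,\cdot\,,\,\cdot\,\rangle_{h_{j,1}}$. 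These endomorphisms are self-adjoint and positive definite almost everywhere (as $0<\det h<\infty$ a.e.), and the monotonicity $h_{j,t}\le h_{j,s}$ for $t\le s$ together with $h_{j,s}\nearrow h$ yields $A_t\le A_s$ and $A_s\to A$ pointwise a.e.

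First I would note that $\|H_s(f_s)\|_{\mathcal H}=\|f_s\|_s\le C$, so after passing to a subsequence $\{H_s(f_s)\}_s$ converges weakly in $\mathcal H$ to some $g$ with $\|g\|_{\mathcal H}\le C$; setting $f:=A^{-1/2}g$ one has $|f|^2_h=|g|^2_{h_{j,1}}$ a.e., hence $f\in L^2_{n,q}(X_j,E;\omega,h)$ with $\|f\|_h=\|g\|_{\mathcal H}\le C$ and $g=H(f)$, which gives the first bullet. For the second bullet I fix $t$ and, for $s\ge t$, introduce the multiplier $M_s:=A_t^{1/2}A_s^{-1/2}$, which satisfies $\|M_s\|\le 1$ everywhere (indeed $M_s^*M_s=A_s^{-1/2}A_tA_s^{-1/2}\le I$ since $A_t\le A_s$) and $M_s^*=A_s^{-1/2}A_t^{1/2}\to A^{-1/2}A_t^{1/2}$ a.e. Then for $\phi\in\mathcal H$,
\[
\langle H_t(f_s),\phi\rangle_{\mathcal H}=\langle M_sH_s(f_s),\phi\rangle_{\mathcal H}=\langle H_s(f_s),M_s^*\phi\rangle_{\mathcal H},
\]
and since $M_s^*\phi\to A^{-1/2}A_t^{1/2}\phi$ strongly in $\mathcal H$ by dominated convergence ($|M_s^*\phi|\le|\phi|$) while $H_s(f_s)\rightharpoonup g$, the right-hand side converges to $\langle g,A^{-1/2}A_t^{1/2}\phi\rangle_{\mathcal H}=\langle A_t^{1/2}f,\phi\rangle_{\mathcal H}=\langle H_t(f),\phi\rangle_{\mathcal H}$. (This weak-times-strong step is the content of Lemma \ref{lem-GMY1} in its evident endomorphism-valued form.) Hence $H_t(f_s)\rightharpoonup H_t(f)$ in $\mathcal H$, and since $H_t$ is an isometry this means $f_s\rightharpoonup f$ in $L^2_{n,q}(X_j,E;\omega,h_{j,t})$ — a sequence that is indeed bounded there, $\|f_s\|_{h_{j,t}}\le\|f_s\|_s\le C$.

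For the third bullet I would apply the first two steps to $g_s:=P_s(f_s)$, legitimately since $\|g_s\|_s\le\|f_s\|_s\le C$: after one further subsequence, $H_s(g_s)\rightharpoonup\widetilde g$ in $\mathcal H$ and $g_s\rightharpoonup\widetilde f:=A^{-1/2}\widetilde g\in L^2_{n,q}(X_j,E;\omega,h)$ in every $L^2_{n,q}(X_j,E;\omega,h_{j,t})$. It remains to show $\widetilde f=P(f)$. Since $\Dbar g_s=0$ and weak $L^2$-convergence implies convergence as currents, $\Dbar\widetilde f=0$, so $\widetilde f\in\Ker\Dbar$ as an element of $L^2_{n,q}(X_j,E;\omega,h)$. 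On the other hand, for any $w\in\Ker\Dbar\cap L^2_{n,q}(X_j,E;\omega,h)$ we have $w\in L^2_{n,q}(X_j,E;\omega,h_{j,s})$, and $P_s$ being the $h_{j,s}$-orthogonal projection onto $\Ker\Dbar$ gives $\langle f_s-g_s,w\rangle_{h_{j,s}}=0$. Rewriting through the isometry $H_s$ as $\langle H_s(f_s)-H_s(g_s),\,H_s(w)\rangle_{\mathcal H}=0$, and using $H_s(f_s)-H_s(g_s)\rightharpoonup g-\widetilde g=H(f-\widetilde f)$ together with $H_s(w)=A_s^{1/2}w\to A^{1/2}w=H(w)$ strongly in $\mathcal H$ (dominated convergence, $|A_s^{1/2}w|\le|A^{1/2}w|=|w|_h$), I pass to the limit and obtain $\langle f-\widetilde f,w\rangle_h=0$. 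Thus $f-\widetilde f\perp_h\Ker\Dbar$, so $\widetilde f=P(f)$, i.e.\ $P_s(f_s)\rightharpoonup P(f)$; the single subsequence produced above serves all three assertions and all $t$ at once.

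The main obstacle is bookkeeping rather than a conceptual difficulty. Because the relative endomorphism $A$ is in general unbounded, the isometry $H$ need not be surjective, so the weak limit must be identified by hand via the formula $f=A^{-1/2}g$ rather than by any abstract argument; and one has to check that the analytic sets $\Sigma_{j,s}$ — on which the $h_{j,s}$ are only trivially extended — do not affect the $L^2$-computations (they are of measure zero, so all the $L^2$-spaces and norms are unchanged). What keeps the argument clean is the choice of the \emph{globally} bounded multipliers $M_s=A_t^{1/2}A_s^{-1/2}$ with $\|M_s\|\le 1$: this is exactly what makes the weak-times-strong principle (Lemma \ref{lem-GMY1}) applicable directly in the fixed space $\mathcal H$, without any need for local uniform bounds on $A_s^{-1/2}$ near $\Sigma_{j,t}$.
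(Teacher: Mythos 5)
Your proof is correct and follows the route the paper itself indicates: the paper sets up the isometries $H_s$, $H$ into the fixed Hilbert space $L^2_{n,q}(X_j, E; \omega, h_{j,1})$ just before the lemma and then invokes \cite[Lemma 9.9]{GMY23} without further argument, while you reconstruct that argument in full via weak compactness of $\{H_s(f_s)\}$, the contraction multipliers $M_s = A_t^{1/2}A_s^{-1/2}$ with $\|M_s\|\leq 1$, and the orthogonality computation identifying $\widetilde{f}$ with $P(f)$. (You also correctly read the statement's ``$P(f_0)$'' as the evident typo for $P(f)$.)
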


We now prove Theorem \ref{thm-seqimpliesL2}, building upon the above preliminary results.

\begin{proof}[Proof of Theorem \ref{thm-seqimpliesL2}]\label{sec-ProofTheorem1-2}

We freely use the notation in the proof as in Definitions \ref{def-singNak:sequenceNew} and \ref{def-singNak:l2global}.
Take an exhaustive Stein open cover $\{ \Omega_k\}_{k=1}^{\infty}$ of $\Omega$. 
For a fixed integer $k \in \mathbb{Z}_{+}$, 
we have $\Omega_k\Subset X_j$ for a sufficiently large $j\geq j_k$ by  $\Omega_k \Subset X$. 
Since we fix such a $j$, we omit the subscript $j$. 
By Definition  \ref{def-singNak:sequenceNew}, we have the following conditions on $\Omega_k$:
\begin{enumerate}
\item[(a)] Set $\Sigma:=\cup_{s=1}^{\infty}\Sigma_{s}$. 
Then, for any $x\in \Omega_{k} \setminus \Sigma$ and $e_x\in E_x$, we have 
$$
|e|_{h_{s}}\nearrow |e|_{h} \text{ as } s \nearrow \infty. 
$$ 
\item[(b)] There exist continuous functions $\lambda_{s}$ and $\lambda$ on $\overline{\Omega}_{k}$ 
such that 
\begin{itemize}
\item $0\leq \lambda_{s}\leq \lambda$ on $\Omega_{k}$; 
\item  $\lambda_{s}\to 0$ almost everywhere on $\Omega_{k}$; 
\item $\ai\Theta_{h_{s}}\geq_\Nak (\theta-\lambda_{s}\omega)\otimes \Id_{E_{h_{s}}}$ 
holds at any point $x \in \Omega_{k} \setminus \Sigma$. 
\end{itemize}
\end{enumerate}
    
Let us apply  Lemma \ref{lem-l2correcting} to 
the data $(X,\omega):=(\Omega_k\setminus \Sigma_{s},\omega_\Omega)$, $(Q,h):=(E,h_{s}e^{-\psi})$, 
$\eta=1$, $g=\delta$, $B_{s}:=B=[\ai\Theta_{h_{s}e^{-\psi}}, \Lambda_{\omega_\Omega}]$, 
and $(B_{s}+C_k\lambda_{s}I)$. 
Here $\delta$ is a positive number (which tends to zero later) and 
$C_k>0$ is a positive constant  such that $\omega\leq C_k\omega_\Omega$ on $\Omega_k$. 
Note that $\Omega_k\setminus \Sigma_{s}$ admits a complete K\"ahler metric by \cite[Theorem 1.5]{Dem82} and $\ai\Theta_{h_{s}}\geq_\Nak (\theta-C_k\lambda_{s}\omega_{\Omega})\otimes \Id_{E_{h_{s}}}$ holds
by Condition (b).
Then $B_{s}:=B=[\ai\Theta_{h_{s}e^{-\psi}}, \Lambda_{\omega_\Omega}]$ satisfies that 
    \begin{align*}
    \langle (B_{s}+C_k\lambda_{s}I)\bullet, \bullet\rangle_{\omega_\Omega, h_{s}} &\geq \langle [\deldel\psi\otimes \Id_E+\theta\otimes \Id_{E}, \Lambda_{\omega_\Omega}]\bullet, \bullet\rangle_{\omega_\Omega, h_{s}}\\
    &=\langle B_{\omega_\Omega,\psi, \theta\otimes \Id_{E}}\bullet, \bullet\rangle_{\omega_\Omega, h_{s}}.
    \end{align*}
Furthermore, we can easily see that 
    \begin{align*}
        \infty &> \int_{\Omega} \langle B^{-1}_{\omega_\Omega,\psi, \theta\otimes \Id_{E}}v, v\rangle_{\omega_\Omega, h}e^{-\psi}dV_{\omega_\Omega}\\
        & \geq \int_{\Omega_k \setminus \Sigma_{s}} \langle B^{-1}_{\omega_\Omega,\psi, \theta\otimes \Id_{E}} v, v\rangle_{\omega_\Omega, h}e^{-\psi}dV_{\omega_\Omega}\\
        &\geq \int_{\Omega_k \setminus \Sigma_{s}} \langle B^{-1}_{\omega_\Omega,\psi, \theta\otimes \Id_{E}} v, v\rangle_{\omega_\Omega, h_{s}}e^{-\psi}dV_{\omega_\Omega}\\
        &\geq \int_{\Omega_k \setminus \Sigma_{s}} \langle (B_{s}+C_k\lambda_{s}I)^{-1}v,v \rangle_{\omega_\Omega, h_{s}}e^{-\psi}dV_{\omega_\Omega}.
    \end{align*}
Lemma \ref{lem-l2correcting} shows that 
there exist an approximate solution $u_{k,s}\in L^2_{n,q-1}(\Omega_k\setminus \Sigma_{s}, E; \omega_\Omega, h_{s}e^{-\psi})$ and a correcting term $\tau_{k,s}\in L^2_{n,q}(\Omega_k\setminus \Sigma_{s}, E; \omega_\Omega, h_{s}e^{-\psi})$ such that 
\begin{equation}\label{eq:d-barkyokugenmae}
\overline{\partial}  u_{k,s}+P_{s}(\sqrt{C_k\lambda_{s}}\tau_{k,s})=v \text{ on $\Omega_k\setminus \Sigma_{s}$ }
\text{ and }
\end{equation}
\begin{align}\label{eq:L2kyokugenmae}
\int_{\Omega_k\setminus \Sigma_{s}}\frac{|u_{k,s}|^2_{\omega_\Omega,h_{s}}e^{-\psi}}{(1+\frac{1}{\delta})}dV_{\omega_\Omega}&+\int_{\Omega_k\setminus\Sigma_{s}} |\tau_{k,s}|^2_{\omega_\Omega,h_{s}}e^{-\psi}dV_{\omega_\Omega}
\\ & \leq \int_{\Omega} \langle B^{-1}_{\omega_\Omega,\psi, \theta\otimes \Id_{E}}v, v\rangle_{\omega_\Omega, h}e^{-\psi}dV_{\omega_\Omega}. \notag
\end{align}
Note that $C_k\lambda_{s}\geq 0$ is a bounded continuous function on $\Omega_k$. 

The  $L^{2}$-sections $u_{k,s}$, $\tau_{k,s}$ and Equation \eqref{eq:d-barkyokugenmae}
are a prior considered on $\Omega_k\setminus \Sigma_{s}$. 
Nevertheless, as mentioned at the beginning of Section \ref{sec-ProofTheorem1-2}, 
we may assume that $u_{k,s}$ and $\tau_{k,s}$ are $L^{2}$-sections on $\Omega_{k}$ 
and satisfies Equation \eqref{eq:d-barkyokugenmae} on $\Omega_{k}$. 

Since $\| \sqrt{C_k\lambda_s}\tau_{k,s}\|_s$ and $\| {\tau}_{k,s} \|_{s}$ are bounded uniformly in $s$, 
we can find a subsequence (for which we use the same notation) 
such that $\{ \sqrt{C_k\lambda_s}\tau_{k,s}\}_s$, $\{ P_s(\sqrt{C_k\lambda_s}\tau_{k,s})\}_s$, 
and $\{ {\tau}_{k,s} \}_{s}$ are weakly convergent  
in $L^2_{n,q}(\Omega_k, E; \omega_\Omega, h_{t}e^{-\psi})$ for a fixed $t\in \Z_{\geq 0}$
(see Lemma \ref{lem-GMY2}). 
Since $0\leq \lambda_s \leq \lambda$ and $\lambda_{s}$ converges pointwise to $0$, 
we see that $P_{s}(\sqrt{C_k\lambda_{s}}\tau_{k,s})\to P(0)=0$ weakly as $s\to \infty$ in $L^2_{n,q}(\Omega_k, E; \omega_\Omega, h_{t}e^{-\psi})$ by Lemma \ref{lem-GMY1}. 
Similarly, by applying Lemma \ref{lem-GMY2} to $\{ {u}_{k,s}\}_s$, 
we may assume that for some (and by the diagonal argument for any) $t\in \Z_{\geq 0}$ $\{ {u}_{k,s} \}_{s}$ weakly converges to $u_{k}$ 
in $L^2_{n,q-1}(\Omega_k, E; \omega_\Omega, h_{t}e^{-\psi})$. 
Therefore, by taking $s\to \infty$ in (\ref{eq:d-barkyokugenmae}), we have that 
\begin{equation}\label{eq:d-bar}
\overline{\partial}  u_{k}=v \text{ on $\Omega_k$ \text{ and } }
\end{equation}
\begin{align}\label{eq:L2}
\int_{\Omega_k }\frac{|u_{k}|^2_{\omega_\Omega,h_{t}}e^{-\psi}}{(1+\frac{1}{\delta})}dV_{\omega_\Omega} \leq \int_{\Omega} \langle B^{-1}_{\omega_\Omega,\psi, \theta\otimes \Id_{E}}v, v\rangle_{\omega_\Omega, h}e^{-\psi}dV_{\omega_\Omega}.
\end{align}
The monotone convergence theorem shows that 
\begin{equation*}
\int_{\Omega_k} \frac{| {u}_{k}|^2_{\omega_\Omega,h} e^{-\psi}}{(1+\frac{1}{\delta})}dV_{\omega_\Omega} \leq \int_{\Omega} \langle B^{-1}_{\omega_\Omega,\psi, \theta\otimes \Id_{E}}v, v\rangle_{\omega_\Omega, h}e^{-\psi}dV_{\omega_\Omega}.
\end{equation*}

Let us regard $u_{k}$ (define on $\Omega_{k}$) as an $L^{^{2}}$-section on $\Omega$ using the zero extension. 
Then, in the same way as in Lemma \ref{lem-GMY2} (cf.\,the proof of \cite[Proposition 3.4]{Mat22}), 
we can find a subsequence of $\{ u_{k}\}_k$ that 
weakly converges to some $u$ satisfying that 
\begin{equation*}
    \int_{\Omega} \frac{|u|^2_{\omega_\Omega,h} e^{-\psi}}{(1+\frac{1}{\delta})}dV_{\omega_\Omega} \leq \int_{\Omega} \langle B^{-1}_{\omega_\Omega,\psi, \theta\otimes\Id_{E}}v, v\rangle_{\omega_\Omega, h}e^{-\psi}dV_{\omega_\Omega}. 
\end{equation*}
By taking $\liminf_{\delta\to\infty}$ and using Fatou's lemma, we obtain that 
\begin{equation*}
    \int_{\Omega} |u|^2_{\omega_\Omega,h} e^{-\psi} \leq \int_{\Omega} \langle B^{-1}_{\omega_\Omega,\psi, \theta\otimes\Id_{E}}v, v\rangle_{\omega_\Omega, h}e^{-\psi}dV_{\omega_\Omega}.
\end{equation*}
We also see that $\overline{\partial}  u=v$ on $\Omega$, which finishes the proof. 
\end{proof}

As an application of Theorem \ref{thm-seqimpliesL2}, 
we derive the following corollary, which extends the results of \cite[Proposition 6.1]{Ina22}. 
Notably, we see that if a smooth Hermitian metric $h$ satisfies the conditions of Definition \ref{def-singNak:sequenceNew}, 
then $h$ is Nakano positive in the usual sense by \cite[Theorem 1.1]{DNWZ23}. 
This conclusion is significantly non-trivial.

\begin{cor}\label{cor-increasingNakano}
Let $E$ be a vector bundle on a complex manifold $X$ and $h$ be a singular Hermitian metric on $E$. 
Assume that there exists a sequence $\{ h_\nu\}_{\nu=1}^\infty$ of 
smooth Nakano semi-positive metrics $h_\nu$ on $E|_{X\setminus \Sigma_\nu}$ 
such that $\{ h_\nu\}_{\nu=1}^\infty$ increasing to $h$ pointwise on $X\setminus \cup_{\nu=1}^\infty \Sigma_{\nu}$, 
where $\Sigma_\nu$ is a $($proper closed$)$ analytic subset of $X$. 
Then, it follows that 
    \[
    \ai\Theta_{h} \geq^s_{\Nak} 0 \text{ \ and \ } \ai\Theta_{h} \geq^{L^2}_{\Nak} 0.
    \]
In particular, if $h$ is smooth, then $h$ is a Nakano semi-positive metric in the usual sense.
\end{cor}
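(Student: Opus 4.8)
The plan is to derive Corollary \ref{cor-increasingNakano} directly from Theorem \ref{thm-seqimpliesL2} by exhibiting the data required by Definition \ref{def-singNak:sequenceNew}. First I would fix an exhaustion: choose an open cover $\{X_j\}_{j=1}^\infty$ with $X_j \Subset X_{j+1} \Subset X$, and fix an arbitrary Hermitian form $\omega$ on $X$ (the definition is independent of this choice). For the analytic subsets I would simply take $\Sigma_{j,s} := \Sigma_s \cap X_j$ (a proper closed analytic subset of $X_j$, shrinking $X_j$ slightly if necessary so that no $\Sigma_s$ contains a component of $X_j$), and set $h_{j,s} := h_s|_{X_j \setminus \Sigma_{j,s}}$. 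Since each $h_s$ is smooth (hence $C^2$) and Nakano semi-positive on $X \setminus \Sigma_s$, Condition (b) holds with $\theta = 0$, $\lambda_{j,s} \equiv 0$, and $\lambda_j \equiv 0$: indeed $\ai\Theta_{h_{j,s}} \geq_\Nak 0 = (0 - 0\cdot\omega)\otimes \Id$ on $X_j \setminus \Sigma_{j,s} \supset X_j \setminus \Sigma_j$. Condition (a) is exactly the hypothesis that $\{h_\nu\}$ increases pointwise to $h$ on $X \setminus \cup_\nu \Sigma_\nu$. One technical point to check is that $h^*$ is upper semi-continuous, as required in Definition \ref{def-singNak:sequenceNew}: since $|e|^2_{h_\nu^*}$ decreases to $|e|^2_{h^*}$ pointwise as $\nu \to \infty$ (the dual metrics decrease when the metrics increase) and each $|e|^2_{h_\nu^*}$ is continuous on $X \setminus \Sigma_\nu$, one obtains upper semi-continuity of $|e|^2_{h^*}$ as a decreasing limit of continuous functions, at least away from the $\Sigma_\nu$'s; a small argument using that the $\Sigma_\nu$ are analytic (hence the complement is dense and the limit extends) takes care of the rest.

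Once the data are in place, $\ai\Theta_h \geq^s_\Nak 0$ holds by definition, and $\ai\Theta_h \geq^{L^2}_\Nak 0$ then follows immediately from Theorem \ref{thm-seqimpliesL2} applied with $E$, $h$, and $\theta = 0$. For the final assertion, suppose $h$ itself is smooth. Then $\ai\Theta_h \geq^{L^2}_\Nak 0$ says precisely that the $\overline{\partial}$-equation admits solutions with the optimal $L^2$-estimate governed by the curvature operator $B_{\omega_\Omega,\psi,0} = [\deldel\psi\otimes\Id_E,\Lambda_{\omega_\Omega}]$ on every Stein coordinate chart. By \cite[Theorem 1.1]{DNWZ23}, this optimal $L^2$-estimate characterization for a smooth Hermitian metric forces $\ai\Theta_h \geq_\Nak 0$ in the usual (pointwise curvature) sense. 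Hence $h$ is Nakano semi-positive in the classical sense.

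I expect the only genuine subtlety to be the bookkeeping needed to honor all the formal requirements of Definition \ref{def-singNak:sequenceNew} — in particular ensuring each $\Sigma_{j,s}$ is a proper analytic subset of the (possibly slightly shrunk) $X_j$, and verifying upper semi-continuity of $h^*$ — rather than any substantive analysis; the analytic content is entirely absorbed into Theorem \ref{thm-seqimpliesL2} and \cite[Theorem 1.1]{DNWZ23}. If one wants to be careful about $\overline{X}_j$ in Condition (b), note that with $\lambda_{j,s} \equiv \lambda_j \equiv 0$ the required continuous functions on $\overline{X}_j$ are trivially available. This makes the proof essentially a verification that the hypotheses of the corollary are a special case of the setup of Definition \ref{def-singNak:sequenceNew}, followed by a direct invocation of the two cited theorems.
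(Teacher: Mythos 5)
The paper does not give an explicit proof of Corollary \ref{cor-increasingNakano}; it is stated as an immediate consequence of Theorem \ref{thm-seqimpliesL2} together with \cite[Theorem~1.1]{DNWZ23}. Your proposal spells out exactly the intended argument: set $\Sigma_{j,s}=\Sigma_s\cap X_j$, $h_{j,s}=h_s|_{X_j\setminus\Sigma_{j,s}}$, $\lambda_{j,s}\equiv\lambda_j\equiv0$ to obtain $\ai\Theta_h\geq^s_{\Nak}0$, then invoke Theorem \ref{thm-seqimpliesL2} for the $L^2$ statement and \cite[Theorem~1.1]{DNWZ23} for the smooth case. This is correct and matches the paper's route.

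Two small remarks. The shrinking of $X_j$ you suggest is unnecessary: a proper analytic subset $\Sigma_s\subset X$ is nowhere dense, so $\Sigma_s\cap X_j$ is automatically a proper closed analytic subset of the open set $X_j$. More substantively, your upper semi-continuity discussion does not actually close the gap as written: the hypothesis constrains $h$ only on $X\setminus\cup_\nu\Sigma_\nu$, so the decreasing-limit argument gives upper semi-continuity of $|u|^2_{h^*}$ only on that (possibly non-open) set and says nothing about the values of $h^*$ on $\cup_\nu\Sigma_\nu$; the ``small argument'' you gesture at cannot recover this, since $h$ is genuinely unconstrained there. This is, however, an imprecision in the corollary's statement rather than in your argument: Definition \ref{def-singNak:sequenceNew} requires $h^*$ to be upper semi-continuous for the conclusion even to be well-formed, so one must either impose this as an extra hypothesis or agree that $h$ is the upper semi-continuous regularization of the limit. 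With that understood, your proof is complete and is the one the paper intends.
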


\section{Applications of Theorem \ref{thm-seqimpliesL2}}
This section presents several applications of Theorem \ref{thm-seqimpliesL2} and its proof.
We first establish the following theorem, which is a generalization of Nadel's coherence theorem for multiplier ideal sheaves.

\begin{theo}\label{thm-coherence}
Let $E$ be a vector bundle on a complex manifold $X$. 
   Let $h$ be a singular Hermitian metric on $E$ such that 
\[
\ai\Theta_h \geq^s_{\Nak} \theta \otimes \Id_{E_h} \text{ holds in the sense of Definition \ref{def-singNak:sequenceNew}}
\]
for some continuous real $(1,1)$-form  $\theta$ on $X$. 
Then, the sheaf $\mathcal{E}(h)$ of locally $L^2$-integrable sections of $E$ with respect to $h$ 
$($see \cite[Definition 2.3.1]{deC98}$)$ 
is coherent. 
\end{theo}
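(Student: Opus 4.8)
The plan is to reduce the statement to the classical Nadel coherence argument, using Theorem \ref{thm-seqimpliesL2} to supply the $L^2$-solvability of $\overline\partial$ that the classical proof needs. Coherence is a local statement, so I would fix a point $x_0 \in X$ and work on a small Stein coordinate ball $\Omega$ with a trivialization $E|_\Omega \cong \Omega \times \C^r$; shrinking $\Omega$ I may assume $\theta + \deldel\psi > 0$ on $\Omega$ for a suitable smooth strictly psh $\psi$ (e.g.\ $\psi = A|z|^2$ for $A \gg 0$, since $\theta$ is continuous). By Theorem \ref{thm-seqimpliesL2}, $\ai\Theta_h \geq^{L^2}_{\Nak}\theta\otimes\Id_{E_h}$ holds, which gives the optimal $L^2$-estimate for $\overline\partial$ in bidegree $(n,q)$ with values in $(E,he^{-\psi})$ on $\Omega$; twisting by $K_\Omega^{-1}$ (trivial on the ball) this becomes a solvability statement for $\overline\partial$ with values in $(E, he^{-\psi})$ in bidegree $(0,1)$.

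The core of the argument is then the standard Nadel/H\"ormander machinery, exactly as in \cite[Section 2]{deC98} for the singular-metric vector bundle case (modeled on Nadel's proof for multiplier ideal sheaves). First I would show $\mathcal{E}(h)$ is a subsheaf of the sheaf of germs of holomorphic sections of $E$: a locally $L^2$ section which is $\overline\partial$-closed in the distributional sense is holomorphic by ellipticity. Then, letting $\mathcal{E}(h)_{x_0} \subset \mathcal{O}(E)_{x_0}$ be the stalk and $\mathcal{F}$ the coherent subsheaf it generates near $x_0$ (finitely generated since $\mathcal{O}(E)_{x_0}$ is Noetherian), I must prove $\mathcal{F}_x = \mathcal{E}(h)_x$ for all $x$ near $x_0$. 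The inclusion $\mathcal{F}_x \subseteq \mathcal{E}(h)_x$ is clear once one knows $\mathcal{E}(h)$ is $\mathcal{O}_X$-coherent-compatible, i.e.\ closed under multiplication by germs of holomorphic functions — immediate from the definition. For the reverse inclusion one uses the Artin--Rees / Krull intersection lemma: it suffices to show that for a fixed large integer $k$, any germ $f \in \mathcal{E}(h)_x$ lies in $\mathcal{F}_x + \mathfrak{m}_x^k\,\mathcal{O}(E)_x$. This is where $L^2$-solvability enters: given a local $L^2$ section $f$ near $x$, one cuts off $f$ with a bump function supported near $x$, forms $v = \overline\partial(\chi f)$, which is $\overline\partial$-closed, supported away from $x$, and $L^2$ against $he^{-\psi'}$ where $\psi' = \psi + (n+k)\log|z - x|$ (a psh weight still satisfying $\theta + \deldel\psi' > 0$ on $\Omega\setminus\{x\}$, and whose singularity forces vanishing to order $\geq k$); solving $\overline\partial u = v$ with the $L^2$-estimate against that weight produces a global holomorphic section $\chi f - u$ of $E$ near $x$ which is $L^2$ (hence in $\mathcal{E}(h)_{x_0}$, hence in $\mathcal{F}$) and agrees with $f$ modulo $\mathfrak{m}_x^k$.

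The main obstacle — and the only place where care is genuinely needed — is checking that the weighted $L^2$-estimate of Definition \ref{def-singNak:l2global} actually applies to the cut-off datum $v$ with the singular weight $\psi'$. Definition \ref{def-singNak:l2global} is stated for \emph{smooth} $\psi$ with $\theta + \deldel\psi > 0$, whereas I want $\psi' = \psi + (n+k)\log|z-x|$, which is only psh with a logarithmic pole. The fix is routine but must be spelled out: approximate $\log|z-x|$ by the smooth decreasing family $\tfrac12\log(|z-x|^2 + \varepsilon^2)$, apply the global $L^2$-estimate for each $\varepsilon > 0$ (the Bochner--Kodaira operator $B_{\omega_\Omega,\psi_\varepsilon,\theta\otimes\Id_E}$ only gets more positive as $\varepsilon\to 0$ since $\deldel\log(|z-x|^2+\varepsilon^2) \geq 0$, so the right-hand side is uniformly bounded by the $\varepsilon = \infty$, i.e.\ $\psi$, value), extract a weak limit of the solutions $u_\varepsilon$, and apply Fatou/monotone convergence to get the estimate against $he^{-\psi'}$. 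Since the right-hand side $\int_\Omega \langle B^{-1}v, v\rangle\, e^{-\psi'}$ is finite (the integrand is supported where $\chi$ has nonzero derivative, hence bounded away from $x$, so the $e^{-\psi'}$ singularity is harmless there), the limiting solution $u$ satisfies $\int |u|^2_h e^{-\psi'} < \infty$, which is exactly the vanishing-to-order-$k$ condition at $x$. With this in hand the Nadel argument closes, and shrinking $\Omega$ if necessary gives coherence of $\mathcal{E}(h)$ near $x_0$; as $x_0$ was arbitrary, $\mathcal{E}(h)$ is coherent on $X$.
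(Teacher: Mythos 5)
Your argument is correct in substance and starts from the same pivot as the paper: apply Theorem~\ref{thm-seqimpliesL2} to convert the hypothesis $\ai\Theta_h \geq^s_{\Nak} \theta\otimes\Id_{E_h}$ into $\ai\Theta_h \geq^{L^2}_{\Nak}\theta\otimes\Id_{E_h}$, then localize to a Stein chart. The divergence is in what comes next. The paper takes a short route: on a Stein $U\Subset V$ it picks a smooth $\varphi$ with $\omega:=\deldel\varphi$ K\"ahler, twists to $he^{-c\varphi}$ with $c\gg 0$ so that $\ai\Theta_{he^{-c\varphi}}\geq^{L^2}_{\Nak}0$ on $U$, notes $\mathcal{E}(h)=\mathcal{E}(he^{-c\varphi})$, and then simply cites the coherence results already proved for $L^2$-Nakano semi-positive singular metrics in \cite[Theorem 1.4]{HI21} and \cite[Proposition 4.4]{Ina22}. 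You instead run the full Nadel/H\"ormander machinery by hand: cut off a germ, add a logarithmic pole to the weight, regularize by $\tfrac12\log(|z-x|^2+\varepsilon^2)$, extract a weak limit, and close via Artin--Rees/Krull. That is exactly what the cited results do under the hood, so you have essentially produced a self-contained version of what the paper quotes. What the paper's route buys is brevity and reuse of an existing lemma; what yours buys is transparency about the one genuinely delicate point, namely that Definition~\ref{def-singNak:l2global} is stated only for smooth weights and one must justify passing to the singular weight $\psi'$ by a limiting argument --- a subtlety the paper never has to confront because it is internal to the references.

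Two small imprecisions, neither fatal. The uniform bound on the right-hand side as $\varepsilon\to 0$ is not really ``bounded by the $\psi$-value''; since $B_{\psi_\varepsilon}^{-1}$ decreases while $e^{-\psi_\varepsilon}$ increases, the product is not monotone, and the bound instead comes from the fact that $v=\overline{\partial}\chi\wedge f$ is supported on a compact annulus away from $x$, on which both factors converge and stay uniformly bounded as $\varepsilon\to 0$. You do mention this support property, but it should be stated as the reason, not as an afterthought. Also, the coefficient in your weight is off by a factor of two: for an $(n,0)$-form-valued holomorphic germ $u$ near $x$, finiteness of $\int|u|^2 e^{-\psi'}\,dV$ forces $u\in\mathfrak{m}_x^{k+1}$ only when $e^{-\psi'}\sim|z-x|^{-2(n+k)}$, i.e.\ $\psi'=\psi+2(n+k)\log|z-x|$ rather than $\psi+(n+k)\log|z-x|$. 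This is cosmetic and does not affect the structure of the argument.
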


\begin{proof}\label{proof-thm-coherence}
By  Theorem \ref{thm-seqimpliesL2}, we see that 
    \[
    \ai\Theta_h \geq^{L^2}_{\Nak} \theta \otimes \Id_{E_h}.
    \]
Since the coherence is a local property, 
we consider local Stein open coordinate $U\Subset V \Subset X$ 
and a smooth function $\varphi$ on $V$ 
such that $\omega:=\deldel \varphi$ is a K\"ahler form on $V$. 
By taking a sufficiently large $c>0$, we have that 
    \[
    \ai\Theta_{he^{-c \varphi}} \geq^{L^2}_{\Nak} 0 \text{ on } U.
    \]
Then, observing that $\mathcal{E}(h)=\mathcal{E}(he^{-c\varphi})$, 
we deduce the coherence from \cite[Theorem 1.4]{HI21} or \cite[Proposition 4.4]{Ina22}.

\end{proof}

By invoking the proof of Theorem \ref{thm-seqimpliesL2}, we can solve the $\Dbar$-equation with $L^2$-estimates. 

\begin{theo}\label{thm-weaklyL2}
Let $E$ be a vector bundle on a weakly pseudoconvex K\"ahler manifold $(X, \omega)$. 
Let $f\colon X\to \R$ be a positive continuous on $X$ and $h$ be a singular Hermitian metric on $E$ satisfying 
    \[
    \ai\Theta_h \geq^s_{\Nak} f\omega\otimes \Id_{E_h} \text{    in the sense of Definition \ref{def-singNak:sequenceNew}.}
    \]
    Then, for any $\overline{\partial} $-closed $v\in L^2_{n,q}(X,E;\omega, h)$ with 
    \[
    \int_X \frac{|v|^2_{\omega,h}}{f} dV_\omega <\infty,
    \]
     there exists $u\in L^2_{n,q-1}(X,E;\omega, h)$ such that 
     \[
     \overline{\partial}  u=v \text{ and } \int_X |u|^2_{\omega,h}dV_\omega \leq \int_X \frac{|v|^2_{\omega,h}}{qf} dV_\omega.
     \]
\end{theo}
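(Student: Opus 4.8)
The plan is to mimic the proof of Theorem~\ref{thm-seqimpliesL2}, but now working globally on $X$ rather than on Stein pieces, since $X$ is weakly pseudoconvex K\"ahler and we want the solution directly on $X$. First I would fix the data $(\{X_j\}, \{\Sigma_{j,s}\}, \{h_{j,s}\})$ furnished by $\ai\Theta_h \geq^s_{\Nak} f\omega\otimes\Id_{E_h}$. The idea is to solve the $\Dbar$-equation on each $X_j\setminus\Sigma_{j,s}$ using Lemma~\ref{lem-l2correcting} with the correcting term, then let $s\to\infty$ to kill the correcting term (exactly as in the proof of Theorem~\ref{thm-seqimpliesL2}, via Lemmas~\ref{lem-GMY1} and \ref{lem-GMY2}), obtaining a solution $u_j$ on $X_j$; then let $j\to\infty$ and extract a weak limit $u$ on all of $X$. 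The curvature bound $\ai\Theta_{h_{j,s}} \geq_\Nak (f-\lambda_{j,s}\omega)\otimes\Id$, combined with $f>0$ continuous, means that on each relatively compact $X_j$ we have $f \geq \e_j > 0$, so after absorbing the small error $\lambda_{j,s}$ the operator $B_{j,s} := [\ai\Theta_{h_{j,s}}, \Lambda_\omega]$ (plus the $C\lambda_{j,s}I$ correction) dominates $[f\omega\otimes\Id, \Lambda_\omega] \geq q f\,\Id$ on $(n,q)$-forms, which gives the $1/(qf)$ factor in the estimate.

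The key steps, in order: (1) Reduce to $X_j$: on $X_j\setminus\Sigma_{j,s}$, which carries a complete K\"ahler metric by \cite[Theorem 1.5]{Dem82}, apply Lemma~\ref{lem-l2correcting} with $\eta = 1$, $g = \delta > 0$, $Q = E$, $h = h_{j,s}$, and $\lambda = C_j\lambda_{j,s}$, using the standard Nakano estimate $\langle [f\omega\otimes\Id, \Lambda_\omega]\alpha, \alpha\rangle \geq q f |\alpha|^2$ for $(n,q)$-forms to bound $\langle(B_{j,s}+C_j\lambda_{j,s}I)^{-1}v, v\rangle \leq \tfrac{1}{qf}|v|^2$ on $X_j$; this needs $\int_X \tfrac{|v|^2}{f}\,dV_\omega < \infty$ to guarantee finiteness. (2) Let $s\to\infty$: the correcting term $P_s(\sqrt{C_j\lambda_{j,s}}\,\tau_{j,s,k})$ goes weakly to $0$ by Lemmas~\ref{lem-GMY1}--\ref{lem-GMY2} since $\lambda_{j,s}\to 0$ a.e.\ and is dominated by $\lambda_j$, giving $\Dbar u_j = v$ on $X_j$ with $\tfrac{1}{1+1/\delta}\int_{X_j}|u_j|^2\,dV_\omega \leq \int_X\tfrac{|v|^2}{qf}\,dV_\omega$. (3) Let $\delta\to\infty$ (Fatou) to remove the $\tfrac{1}{1+1/\delta}$ factor, then let $j\to\infty$: extract a weakly convergent subsequence of $\{u_j\}$ (extended by zero), obtaining $u\in L^2_{n,q-1}(X,E;\omega,h)$ with $\Dbar u = v$ on $X$ and the desired estimate. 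The weak limit argument for the $j\to\infty$ step is the same one used at the end of the proof of Theorem~\ref{thm-seqimpliesL2} (cf.\ the proof of \cite[Proposition 3.4]{Mat22}).

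The main obstacle I anticipate is bookkeeping the two-parameter limit process cleanly, particularly ensuring that the curvature hypothesis is correctly used: on $X_j\setminus\Sigma_j$ we only know $\ai\Theta_{h_{j,s}}\geq_\Nak (f-\lambda_{j,s}\omega)\otimes\Id_{E_{h_{j,s}}}$, so I must add $C_j\lambda_{j,s}I$ (with $C_j$ chosen so $\omega\leq C_j\omega$ trivially, i.e.\ $C_j = 1$, or more care if the background metric for $B$ differs from $\omega$) to recover positivity of $B_{j,s}+\lambda I$, and then check that the inverse still satisfies $\langle(B_{j,s}+C_j\lambda_{j,s}I)^{-1}v, v\rangle \leq \langle[f\omega\otimes\Id,\Lambda_\omega]^{-1}v, v\rangle \leq \tfrac{1}{qf}|v|^2$, which holds because $B_{j,s}+C_j\lambda_{j,s}I \geq [f\omega\otimes\Id, \Lambda_\omega]$ as positive operators. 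A secondary subtlety is that $f$ need only be continuous and positive (not bounded below on $X$), so the estimate $\tfrac{1}{qf}$ is genuinely a weight; this is fine since $1/f$ is locally bounded, the finiteness of $\int_X \tfrac{|v|^2}{f}\,dV_\omega$ is assumed, and on each $X_j$ we have $\inf_{X_j} f > 0$ so all intermediate $L^2$-spaces are comparable to the $h$-spaces. No new ideas beyond the proof of Theorem~\ref{thm-seqimpliesL2} are needed; it is a matter of running the same machine with the sharp Nakano constant.
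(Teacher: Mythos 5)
Your proposal matches the paper's proof essentially verbatim: the paper likewise takes a relatively compact weakly pseudoconvex K\"ahler exhaustion $\{X_j\}$, applies Lemma~\ref{lem-l2correcting} on $X_j\setminus\Sigma_{j,s}$ using the bound $B_{j,s}+\lambda_{j,s}I\geq[f\omega\otimes\Id_E,\Lambda_\omega]=qf\cdot\Id$ on $(n,q)$-forms, and passes to the limits in $s$, $\delta$, and $j$ exactly as in the proof of Theorem~\ref{thm-seqimpliesL2}. You also correctly note that the constant $C_j$ from the earlier proof degenerates to $1$ here since the curvature inequality and the $L^2$-estimate both use the same K\"ahler form $\omega$.
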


\begin{proof}
The proof is essentially the same as in the proof of Theorem \ref{thm-seqimpliesL2}; thus, we present only a sketch of the proof.
Take an open cover $\{X_{j}\}_{j=1}^{\infty}$ of $X$ 
such that $X_{j} \Subset X_{j+1} \Subset X$ holds and $X_j$ is a weakly pseudoconvex K\"ahler manifold. 
Define $B_{j,s}:=[\ai\Theta_{h_{j,s}}, \Lambda_\omega]$.  
By $\ai\Theta_{h_{j,s}}\geq_{\Nak}(f-\lambda_{j,s})\omega\otimes\Id_{E_{h_{j,s}}}$ on $X_j\setminus \Sigma_{j,s}$, 
we see that 
    \begin{align*}
         \langle (B_{j,s}+\lambda_{j,s}I)\bullet, \bullet\rangle_{\omega, h_{j,s}} &\geq \langle [f\omega\otimes\Id_E, \Lambda_{\omega}]\bullet, \bullet\rangle_{\omega, h_{j,s}}=qf|\bullet|^2_{\omega, h_{j,s}}. 
    \end{align*}
This inequality implies that 
    \begin{align*}
        \infty >& \int_X \frac{|v|^2_{\omega, h}}{qf} dV_\omega \\
        \geq&  \int_{X_j\setminus \Sigma_{j,s}} \frac{|v|^2_{\omega, h}}{qf} dV_\omega \\
        \geq& \int_{X_j\setminus \Sigma_{j,s}} \langle (B_{j,s}+\lambda_{j,s}I)^{-1}v, v\rangle_{\omega, h_{j,s}}  dV_\omega. 
    \end{align*}
Then, in the same way as in the proof of Theorem \ref{thm-seqimpliesL2}, 
by applying Lemma \ref{lem-l2correcting} and taking the limit as $s\to \infty, j\to \infty$, 
we obtain the desired conclusion. 
\end{proof}

As an application of Theorem \ref{thm-weaklyL2}, we obtain Theorem \ref{mainthm-vanishing}. 

\begin{proof}[Proof of Theorem \ref{mainthm-vanishing}]
Take an arbitrary $\overline{\partial} $-closed and locally $L^{2}$-integrable 
$v \in L^{2, \rm{loc}}_{n,q}(X,E;\omega', h)$. 
Note that the space $L^{2, \rm{loc}}_{n,q}(X,E;\omega', h)$ does not depend on the choice of $\omega'$. 
By the standard argument using the De Rham-Weil isomorphism, 
it is sufficient to find $u\in L^{2, \rm{loc}}_{n,q-1}(X,E;\omega', h)$ satisfying the $\Dbar$-equation $\Dbar u =v$. 
Take a smooth function $f>0$ on $X$ such that $\omega' \geq f \omega$ holds on $X$. 
Since $X$ is weakly pseudoconvex, we can take an exhaustive smooth psh function $\psi$ on $X$. 
Since $\psi$ is exhaustive and $v$ is locally $L^{2}$-integrable, 
there exists an increasing convex function $\chi\colon \mathbb{R} \to \mathbb{R}$ 
such that $v/\sqrt{f}$ is $L^{2}$-integrable on $X$ with respect to $he^{-\chi \circ \psi}$ and $\omega$. 
Since $\chi \circ \psi$ is a psh function on $X$, we can easily see that 
    \[
    \ai\Theta_{he^{-\chi \circ \psi}} \geq^s_{\Nak} f\omega\otimes \Id_{E_h} \text{    in the sense of Definition \ref{def-singNak:sequenceNew}.}
    \]
Then, by Theorem \ref{thm-weaklyL2}, there exists 
$u\in L^{2}_{n,q-1}(X,E;\omega, he^{-\chi \circ \psi})\subset L^{2, \rm{loc}}_{n,q-1}
(X,E;\omega', h)$ satisfying the $\Dbar$-equation $\Dbar u =v$. 
\end{proof}

When $X$ is a projective manifold, 
the theorem immediately follows from Theorem \ref{thm-seqimpliesL2} and \cite[Theorem 1.5]{Ina22}. 
In \cite{Ina22}, it was shown that if $\ai\Theta_h \geq^{L^2}_{\Nak} \delta\omega_X \otimes \Id_{E_h} $ for a projective manifold $(X, \omega_X)$, it holds that $H^q(X, K_X\otimes \mathcal{E}(h))=0$ for $q>0$. 
(The definition in \cite{Ina22} assumes that $h$ is Griffiths semi-positive in the sense of singular Hermitian metrics, 
but this condition is not necessary in the proof of the vanishing theorem; it suffices for $h^* $ to be upper semi-continuous.)

At the end of this section, let us observe that \cite[Theorem 1.3]{Iwa21} follows from Theorem \ref{mainthm-vanishing}.

\begin{theo}[{\cite[Theorem 1.3]{Iwa21}}]\label{thm-Iwaivanishing}
    Let $(X,\omega) $ be a compact K\"ahler manifold and $(E,h) $ be a holomorphic vector bundle on $X$ with a singular Hermitian metric. We assume the following conditions. 
    \begin{enumerate}
        \item There exists a proper analytic subset $Z$ such that $h$ is smooth on $X\setminus Z$.
        \item  $he^{-\zeta}$ is a positively curved singular Hermitian metric on $E$ for some continuous function $\zeta$. 
        \item  There exists a positive number $\varepsilon>0$ such that $\ai \Theta_{h}-\varepsilon\omega \otimes \Id_{E_{h}}\geq 0 $ on $X\setminus Z $ in the sense of Nakano. 
    \end{enumerate}
    Then $H^q(X, K_X\otimes \mathcal{E}(h))=0 $ holds for any $q>0$.  
\end{theo}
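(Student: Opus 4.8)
The plan is to deduce Theorem~\ref{thm-Iwaivanishing} directly from Theorem~\ref{mainthm-vanishing} by verifying that the hypotheses (1)--(3) of Iwai's theorem imply our sequential Nakano positivity $\ai\Theta_h \geq^s_{\Nak} \omega' \otimes \Id_{E_h}$ for a suitable continuous Hermitian form $\omega'$. Since a compact K\"ahler manifold is in particular weakly pseudoconvex K\"ahler, no work is needed on the ambient geometry side; the entire content is the construction of the approximating data $(\{X_j\}, \{\Sigma_{j,s}\}, \{h_{j,s}\})$ of Definition~\ref{def-singNak:sequenceNew}.

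First I would set $\omega' := (\varepsilon/2)\,\omega$ (or any continuous Hermitian form dominated by $\varepsilon\omega$ with a gap to spare) and $\theta := \omega'$. For the open cover, since $X$ is compact one can simply take $X_j := X$ for all $j$, so that the nested-exhaustion requirement $X_j \Subset X_{j+1} \Subset X$ is vacuously arranged by a cosmetic thickening, or more carefully one works on a slightly larger manifold; in practice, as in Remark~\ref{rem-NakimpliesGrif}, taking $X_j$ to be a fixed relatively compact open set is harmless since everything is already defined on all of $X$. For the singular locus, set $\Sigma_{j,s} := Z$ for all $j,s$, which is a proper analytic subset by hypothesis~(1), and take $h_{j,s} := h|_{X \setminus Z}$, which is a smooth (hence $C^2$) Hermitian metric on $E|_{X\setminus Z}$ by~(1). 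Condition~(a) of Definition~\ref{def-singNak:sequenceNew} is then trivially satisfied since the metrics are constant in $s$ and agree with $h$ off $Z$. For Condition~(b), set $\lambda_{j,s} := 0$ and $\lambda_j := 1$ (any continuous nonnegative functions work); the curvature inequality $\ai\Theta_{h_{j,s}} \geq_{\Nak} (\theta - \lambda_{j,s}\omega)\otimes \Id = \omega' \otimes \Id$ on $X \setminus Z$ is exactly a consequence of hypothesis~(3), since $\ai\Theta_h - \varepsilon\omega\otimes\Id_{E_h} \geq 0$ in the Nakano sense gives $\ai\Theta_h \geq \varepsilon\omega\otimes\Id_{E_h} \geq \omega'\otimes\Id_{E_h}$ there.

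Having verified $\ai\Theta_h \geq^s_{\Nak} \omega'\otimes\Id_{E_h}$, Theorem~\ref{mainthm-vanishing} applies verbatim and yields $H^q(X, K_X \otimes \mathcal{E}(h)) = 0$ for all $q > 0$, which is the desired conclusion. One small point to address is that Definition~\ref{def-singNak:sequenceNew} requires $h^*$ to be upper semi-continuous; this is where hypothesis~(2) enters, since $he^{-\zeta}$ being a positively curved singular Hermitian metric (equivalently Griffiths semi-positive in the sense of Definition~\ref{def-GrifPos}) forces $\log|u|_{(he^{-\zeta})^*}$ to be psh, hence upper semi-continuous, for every local holomorphic section $u$ of $E^*$, and multiplying by the continuous factor $e^{\zeta}$ preserves upper semi-continuity of $|u|^2_{h^*}$.

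I do not expect any genuine obstacle here — this is a ``soft'' deduction. The only mild subtlety is bookkeeping around the exhaustion $\{X_j\}$ on a compact $X$: strictly speaking one cannot have $X_j \Subset X$ with $X_j$ an open cover of a compact $X$, so one should either embed $X$ as a relatively compact open subset of a slightly larger complex manifold (e.g.\ work on $X \times \Delta$ near $X \times \{0\}$, or simply note that the definition is insensitive to replacing $X$ by such an enlargement) or, more cleanly, observe that the conclusion of Theorem~\ref{mainthm-vanishing} needs only the curvature and regularity data on $X$ itself and that the exhaustion $\{X_j\}$ in its proof may be taken as any increasing sequence of weakly pseudoconvex K\"ahler open sets — which on compact $X$ one takes to be $X$ itself at each stage up to the aforementioned cosmetic enlargement. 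This is a routine matter and does not affect the substance of the argument.
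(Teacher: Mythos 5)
Your proposal is correct and takes exactly the same route as the paper: set $X_j = X$, $\Sigma_{j,s} = Z$, $h_{j,s} = h|_{X\setminus Z}$, $\lambda_{j,s}=0$, and observe that hypothesis~(3) supplies the curvature inequality while hypothesis~(2) supplies the upper semi-continuity of $h^*$. The one place where you over-complicate matters is the exhaustion: for a compact manifold $X$ the relation $X \Subset X$ holds literally (the closure of $X$ is $X$ itself, which is compact and contained in $X$), so one may take $X_j = X$ for all $j$ without any enlargement, as the paper does; the cosmetic thickening you propose is unnecessary. Your choice $\theta = (\varepsilon/2)\omega$ rather than $\varepsilon\omega$ is an immaterial variation.
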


The above setting is a special case of our formulation. 
Indeed, if we set $X_j=X, \Sigma_{j,s}=Z, h_{j,s}=h|_{X\setminus Z} $ for all $j,s$, 
then $h_{X\setminus Z}$ is a singular Hermitian metric on $E$ satisfying 
\begin{equation*}
    \ai\Theta_{h} \geq^s_{\Nak} \varepsilon \omega \otimes \Id_{E_h}. 
\end{equation*}
Then, the desired vanishing follows from Theorem \ref{mainthm-vanishing}.

\section{On the positivity of direct image sheaves}

In this section, we investigate  conditions under which the direct image sheaf $f_{*}(\mathcal{O}_{X}(K_{X/Y}+L))$ 
satisfies  Nakano positivity as defined in Definition \ref{def-singNak:sequenceNew}. 
Here $f \colon  X \to Y$ is a projective fibration with a line bundle $L$ on $X$. 
Since direct image sheaves are not always locally free, we extend the notion of Nakano positivity 
from vector bundles to torsion-free sheaves. 
In this paper, we identify locally free sheaves with vector bundles.

\begin{defi}[cf.\,Definition \ref{def-singNak:sequenceNew}]\label{def-singNak:sequenceNew-sheaf}
Let $\omega$ be a Hermitian form and $\theta$ be a continuous  real $(1,1)$-form on a complex manifold $X$. 
Let $\mathcal{E}$ be a torsion-free sheaf on $X$ and 
$X_{\mathcal{E}}$ denote the largest Zariski open set where $\mathcal{E}$ is locally free. 
Consider a singular Hermitian metric $h$ on $\mathcal{E}$ 
(i.e.,\,a singular Hermitian metric on the vector bundle $\mathcal{E}|_{X_{\mathcal{E}}}$). 
Assume that the function $|u|^2_{h^*}$ is upper semi-continuous for any local holomorphic section $u$ of $(\mathcal{E}|_{X_{\mathcal{E}}})^*$. 
We define \textit{the $\theta$-Nakano positivity in the sense of approximations}
by the existence of the data 
$(\{ X_j\}_{j=1}^{\infty}, \{ \Sigma_{j,s}\}_{j,s=1}^{\infty}, \{ h_{j,s}\}_{j,s=1}^{\infty})$ 
satisfying the same conditions as in Definition \ref{def-singNak:sequenceNew}, 
with the substitution of $E$ by $\mathcal{E}$ 
and the addition of the condition that $X\setminus X_{\mathcal{E}} \subseteq \Sigma_{j,s}$.
Note that $\{ X_j\}_{j=1}^{\infty}$ is an open cover of $X$ (not only $X_{\mathcal{E}}$). 
\end{defi}

In this section, we first present a proof of Theorem \ref{thm-direct}.
The proof is based on a technical combination of 
Demailly's approximation theorem (see \cite[Main Theorem 1.1]{Dem92}) and 
the Nakano positivity of Narasimhan-Simha metrics 
for a smooth fibration  $f \colon X \to Y$ with a semi-positive line bundle $L$ 
(see \cite{Ber09} and \cite[Theorem 1.6]{DNWZ23}).

\begin{proof}[Proof of Theorem \ref{thm-direct}]
Initially, we consider the case where $Y$ (and thus $X$) is compact. 
Fix an open cover $\{U_{i}\}_{i \in I}$ of $X$ such that $L|_{U_{i}}$ admits a trivialization. 
Then, since $g$ is a smooth metric, we may assume that $g \leq 1$ by scaling $g$ with an appropriate constant. 
Similarly,  since the curvature current of $h$ is bounded below,  we may assume that $h \geq 2$.
Here, the above inequalities are interpreted under the identification of 
the functions on $U_{i}$ and the metrics on $L|_{U_{i}}$  via the fixed trivialization of $L|_{U_{i}}$. 
Throughout the proof, we suppose that  $\e >0 $ and $ \delta >0$ are sufficiently small rational numbers.
By Demailly's approximation theorem (see \cite[Main Theorem 1.1]{Dem92}), 
we can find singular Hermitian metrics $\{h_{\delta}\}_{\delta > 0}$ on $L$ satisfying the following conditions: 
\begin{equation}\label{eq-app}
  \begin{split}
&\text{$\bullet$ $h_{\delta}$ has analytic singularities along an analytic subset $Z_{\delta} \subset X$;} \\
&\text{$\bullet$ $\ai \Theta_{h_{\delta}} \geq f^*\theta -\delta \omega_{X}$ holds;} \\
&\text{$\bullet$ $h_{\delta} \nearrow h$ as $\delta \searrow 0$.} 
  \end{split}
\end{equation}
Note that $h_{\delta} \geq 1 $ holds since $h_{\delta}$ is lower semi-continuous. 
Additionally, the analytic subset $Z_{\delta} \subset X$ is contained in $\{x \in X \mid \nu(h, x)>0\}$, 
indicating that $Z_{\delta}$  is not dominant over $Y$ by assumption. 
We fix a sufficiently large $c>0$ with $c\omega_Y\geq \theta$. 
Define $\lambda_{\e}$ and $\lambda$  as follows:
$$
\text{
$\lambda:=(1+c) \geq \lambda_{\e}:=\e (1 + c) $ 
ensuring that $\lambda_{\e}  \omega_{Y} \geq \e( \omega_{Y}+\theta)$ holds on $Y$. 
}
$$

Consider  the singular Hermitian metric $H_{\e}$ on $L$ defined by 
$$
H_{\e}:=g^{\e} \cdot (h_{\delta(\e)})^{1 - \e}, \text{ where } \delta(\e):=\dfrac{\e} {1-\e}. 
$$
We now confirm that $H_{\e}$ satisfies the following conditions: 
\begin{itemize}
\item[(a)]  $\ai \Theta_{H_{\e}} \geq f^{*}\big(\theta - \lambda_{\e} \omega_{Y} \big) $;
\item[(b)] $\mathcal{I}(H_{\e})=\mathcal{O}_{X}$ on $X \setminus Z_{\delta(\e)}$;
\item[(c)] $H_{\e} \nearrow h$ as $\e \searrow 0$.   
\end{itemize}
Condition (a) is derived from the straightforward computation: 
\begin{align*}
\ai \Theta_{H_{\e}} &= \e \ai\Theta_{g} +  (1-\e) \ai\Theta_{h_{\delta(\e)}} \\
& \geq - \e f^{*} \omega_{Y} + (\e - (1-\e) \delta(\e) )\omega_{X} + (1-\e) f^{*} \theta \\
& \geq  f^{*} \big( \theta - \lambda_{\e} \omega_{Y}\big)
\end{align*}
Condition (b) is obvious since $H_{\e}$ is smooth on $X \setminus Z_{\delta(\e)}$. 
Furthermore, by the definition of $H_{\e}$, it is also clear that $H_{\e}$ converges to $h$ at every point in $X$. 
The remaining task is to check the monotonicity of $H_{\e}$. 
For $\e_{1} \leq \e_{2}$, by noting that $\delta(\e_{1}) \leq \delta(\e_{2})$, 
we find $h_{\delta(\e_{1})} \geq h_{\delta(\e_{2})}$, which implies that
\begin{align*}
\frac{H_{\e_{1}}} {H_{\e_{2}}}&=g^{\e_{1} - \e_{2}} \cdot \frac{h_{\delta(\e_{1})}^{1-\e_{1}}}{h_{\delta(\e_{2})}^{1-\e_{2}}} \\
&\geq g^{\e_{1} - \e_{2}} \cdot h_{\delta(\e_{1})}^{\e_{2} - \e_{1}} 
\end{align*}
Hence, the desired monotonicity follows from $g \leq 1$ and $h_{\delta} \geq 1$.

Consider the natural injective sheaf morphism:  
$$
f_{*}( \mathcal{O}_{X}(K_{X/Y}+L)\otimes \mathcal{I}(H_{\e})) \to f_{*}(\mathcal{O}_{X}(K_{X/Y}+L))=:\mathcal{E}. 
$$
Note that this morphism is an isomorphism on $Y \setminus f(Z_{\delta(\e)})$ by Condition (b). 
Let $G_{\e}$ (resp.\,$G$) be the Narasimhan-Simha  metric on $\mathcal{E}$ 
induced by $H_{\e}$ (resp.\,$h$) and the above morphism (see \cite{HPS18, PT18}). 
We aim to show that $G_{\e}$ and $G$ satisfy the conditions of Definition \ref{def-singNak:sequenceNew-sheaf}. 
Take an analytic subset $V \subset Y$ such that $f \colon  X \to Y$ be a smooth fibration over $Y \setminus V$ 
and 
$$
f_{*}(\mathcal{O}_{X}(K_{X/Y}+L)) \otimes \mathcal{O}_{Y,y}/\mathfrak{m}_{y} 
\cong H^{0}(X_{y}, \mathcal{O}_{X_{y}}(K_{X_{y}} \otimes L|_{X_{y}}))
$$
for a point $ y \in Y \setminus V$. 
Here $\mathfrak{m}_{y}  \subset \mathcal{O}_{Y,y}$ is the maximal ideal of the stalk $\mathcal{O}_{Y,y}$ 
and $X_{y}$ denotes the fiber of $f \colon X \to Y $ at $y$. 
Set $\Sigma_{\e}:= f(Z_{\delta(\e)}) \cup V$ and $\Sigma:=\cup_{0<\e \in \mathbb{Q}_{+}} \Sigma_{\e}$. 
The metric $G_{\e}$ is a smooth Hermitian metric on $\mathcal{E}|_{Y \setminus \Sigma_{\e}}$. 
Moreover, by \cite{Ber09} and \cite[Theorem 1.6]{DNWZ23}, 
Condition (a) indicates that $\ai \Theta_{G_{\e}}\geq_\Nak (\theta - \lambda_{\e} \omega_{Y} )\otimes \Id_{G_{\e}}$ on $Y \setminus \Sigma_{\e}$. 
Meanwhile, by the construction of Narasimhan-Simha metrics, for a (local) section $s$ of $\mathcal{E}$, 
the metrics $G_{\e}$ and $G$  at $y \in Y \setminus \Sigma$ can be expressed as the fiber integral: 
$$
|s|^{2}_{G_{\e}}=\int_{X_{y}} |s  |_{X_{y}} |^{2}_{H_{\e}} \quad \text{ and } \quad |s|^{2}_{G}=\int_{X_{y}} |s  |_{X_{y}} |^{2}_{h}. 
$$
Hence, Condition (c) indicates that $G_{\e} \nearrow G$ holds at any point in $Y\setminus \Sigma$.
This completes the proof in the case where $Y$ is compact.

\smallskip

In the case where $Y$ is non-compact, we encounter a new problem: 
the inequalities $g \leq 1$ and $h \geq 2$ are not satisfied even after scaling $g$ and $h$ with a constant. 
We can find such a constant on a relatively compact subset $X_{j} \Subset X$, 
but the scaling with this constant alters the target $h$ that 
requires approximation, and this alteration depends on each $j$. 
Therefore, some technical modifications are required to address this issue.
The strategy of the proof is to apply the arguments in the first half for the modified metrics $\widetilde g$ and $\widetilde h$, 
which are defined below. 

Since $f \colon  X \to Y$ is a proper fibration, 
we can find smooth functions $\varphi_{g}$ and $\varphi_{h}$ on $Y$
such that 
$$
\widetilde g:=g \cdot e^{-f^{*}\varphi_{g}} \leq 1 \quad \text{ and }\quad \widetilde h:=h\cdot e^{-f^{*}\varphi_{h}} \geq 2.
$$
Then, we can easily check that 
$$
\sqrt{-1}\Theta_{\widetilde h} \geq f^{*} (\theta + \deldel \varphi_{h}) \quad \text{ and } \quad 
\sqrt{-1}\Theta_{\widetilde g} + f^{*} (\omega_{Y} - \deldel \varphi_{g})  \geq \omega_{X}. 
$$
 Take an open cover $\{Y_j\}_{j=1}^{\infty}$  of $Y$ 
such that $Y_j \Subset Y_{j+1} \Subset Y$ and define $X_{j}:=f^{-1}(Y_{j})$. 
Note that the open cover $\{X_j\}_{j=1}^{\infty}$ of $X$ clearly satisfies $X_j \Subset X_{j+1} \Subset X$. 
For each $j \in \mathbb{Z}_{+}$, 
by applying  \cite[Main Theorem 1.1]{Dem92} to $h|_{X_{j}}$, 
we can find singular Hermitian metrics $\{ h_{j, \delta}\}_{\delta>0}$ on $L|_{X_{j}}$ satisfying \eqref{eq-app} on $X_{j}$. 
Precisely, \cite[Main Theorem 1.1]{Dem92} assumes that $X$ is compact, 
but this theorem remains valid for the relatively compact subset $X_{j} \Subset X$ 
(see \cite[Theorem 2.9]{Mat22} for the detailed argument). 
We fix a sufficiently large $c_{j}$ such that 
$$c_j \omega_Y \geq \theta + \deldel(\varphi_h-\varphi_g) \text{ on } Y_j. $$
We define $\lambda_{j,\e}:=\e(1+c_j) \leq (1+c_j)=:\lambda_j$ so that 
\[
\lambda_{j,\e} \omega_Y \geq \e (\omega_Y + \theta + \deldel(\varphi_h-\varphi_g)) \text{ on }Y_j. 
\]
Consider the new metric ${H}_{j,\e}$ defined by
\[
{H}_{j,\e}=\widetilde{g}^\e\cdot (h_{j,\delta(\e)})^{1-\e}e^{f^*\varphi_h}
\]
Then, Conditions (a), (b), (c) on $X_{j}$ can be easily confirmed by the same argument as in the first half.

Let $G_{j,\e}$ (resp.\,$G_j$) be the singular Hermitian metric on $f_{*}(\mathcal{O}_{X}(K_{X/Y}+L))|_{Y_{j}}$ 
induced by $H_{j,\e}$ (resp.\,$h|_{X_{j}}$). 
Define $\Sigma_{j}$ (resp.\,$\Sigma_{j,\e}$) in the same way as $\Sigma$ (resp.\,$\Sigma_{\e}$). 
Then, by the same argument as in the first half, 
we deduce that 
$\ai \Theta_{{G}_{j,\e}}\geq_\Nak (\theta - \lambda_{j,\e} \omega_{Y} )\otimes \Id$ on $Y_j\setminus \Sigma_{j,\e}$ and 
$G_{j,\e}\nearrow G_{j}$ on $Y_j\setminus \Sigma_{j}$, which finishes the proof. 
\end{proof}

At the end of this paper, as an application of Theorems \ref{thm-direct} and \ref{mainthm-vanishing}, 
we establish Theorem \ref{thm-vanishing-vec}. 
This theorem generalizes \cite[Theorem 1.5]{Iwa21}, 
relaxing its assumptions and extending its applicability to weakly pseudoconvex K\"ahler manifolds.
We emphasize that a generalization to weakly pseudoconvex manifolds is natural from the viewpoint of several complex variables.

\begin{proof}[Proof of Theorem \ref{thm-vanishing-vec}]
Set $f \colon Z:=\mathbb{P}(E) \to X$ and $L:=\mathcal{O}_{\mathbb{P}(E)}(1)$. 
Consider the smooth Hermitian metric $g$ on $L$ induced by a smooth Hermitian metric on $E$ and $f^{*}E \to L$. 
It is easy to see that  
$\sqrt{-1}\Theta_{g} + f^{*} \omega_{X} \geq \omega_{Z}$ holds 
for some Hermitian forms. 
Since $\sqrt{-1}\Theta_{h_{q}} $ is a K\"ahler current and $f \colon Z \to X$ is a proper fibration, 
there exists a Hermitian form $\omega'$ on $X$ such that 
$\sqrt{-1}\Theta_{h_{q}} \geq f^{*} \omega'$. 
Let $G$ be the Narasimhan-Simha metric $G$ on $S^{m} E \otimes \det E$ 
induced by the formula 
$$
S^{m} E \otimes \det E \cong  f_{*}\mathcal{O}_{Z}(mL + f^{*}\det E)
= f_{*}\mathcal{O}_{Z}(K_{Z/X} + (r+m+1) L),
$$
where $r:=\rank E$. 
Theorem \ref{thm-direct} shows that 
$\ai\Theta_G \geq^{s}_\Nak    \omega' \otimes \Id_{G}$   on $Y$ 
holds in the sense of Definition \ref{def-singNak:sequenceNew}. 
Meanwhile, we have $G=S^{m}h \otimes \det h$ (see \cite[25]{HPS18} for example). 
Thus, the desired vanishing theorem directly follows from  Theorem \ref{mainthm-vanishing}. 
\end{proof}

\end{document}